\newtheorem{theorem}{Theorem}[section]    % Standard theorem environment
\newtheorem{lemma}[theorem]{Lemma}          % Lemma environment with numbering 
\newtheorem{proposition}[theorem]{Proposition}  
\newtheorem{prop-definition}[theorem]{Proposition-Definition}  
\theoremstyle{definition}
\newtheorem{definition}[theorem]{Definition}
\newtheorem{remark}[theorem]{Remark}
\newtheorem{example}[theorem]{Example}
\def\co{\colon \thinspace}
\newcommand{\GL}{\textrm{GL}}
\newcommand{\Z}{\mathbb{Z}}
\newcommand{\C}{\mathbb{C}}
\newcommand{\sltwo}{\mathfrak{sl}_{2}}
\newcommand{\id}{\mathsf{id}}
\newcommand{\bF}{\mathbb{F}}
\newcommand{\be}{\mathbf{e}}
\newcommand{\half}{\frac{1}{2}}
\DeclareMathOperator{\trace}{trace}
\DeclareMathOperator{\End}{End}
\DeclareMathOperator{\Ker}{Ker}
\title[Colored Alexander invariants]{A homological representation formula of colored Alexander invariants}
\author{Tetsuya Ito}
\address{Research Institute for Mathematical Sciences, Kyoto university, Kyoto, 606-8502, Japan}
\email{tetitoh@kurims.kyoto-u.ac.jp}
\urladdr{http://www.kurims.kyoto-u.ac.jp/~tetitoh/}
\subjclass[2010]{Primary~57M27 
, Secondary~57M25,20F36}
\keywords{Alexander polynomial, Colored Alexander invariant, (truncated) Lawrence's representation}
\begin{document}

\begin{abstract} 
We give a formula of the colored Alexander invariant in terms of the homological representation of the braid groups which we call truncated Lawrence's representation. This formula generalizes the famous Burau representation formula of the Alexander polynomial.
\end{abstract}
\maketitle

\section{Introduction}

The Alexander polynomial is one of the most important and fundamental knot invariant having various definitions and interpretations. Each definition brings a different prospect and often leads to different generalizations.

In this paper we explore one of the generalizations of the Alexander polynomial, the \emph{colored Alexander invariant} introduced in \cite{ado}. This is a family of invariants $\Phi^{N}_{K}$ of a link $K$ indexed by positive integers $N=2,3,\ldots$, and $\Phi_{K}^{2}$ coincides with the (multivariable) Alexander polynomial \cite{mu1}. The first definition of the colored Alexander invariant in \cite{ado} uses a state-sum inspired from solvable vertex model in physics. As is already noted in \cite{ado} and made it clarified in \cite{mu2}, the $N$-th colored Alexander invariant $\Phi^{N}_{K}$ is a version of a quantum $\sltwo$ invariant at $2N$-th root of unity.

Throughout the paper we treat the case that $K$ is a knot. Then the colored Alexander invariant $\Phi^{N}_{K}(\lambda)$ is a function of one variable $\lambda$. We give a new formulation of the colored Alexander invariant, in the same spirit as \cite{i2} where we gave a topological formulation of the loop expansion of the colored Jones polynomials.
 
In Theorem \ref{theorem:main} we show that the colored Alexander invariant is written as a sum of the traces of \emph{homological} representations which we call \emph{truncated Lawrence's representation}. These are variants of Lawrence's representation studied in \cite{law,i1,i2}, derived from an action on the configuration space. Our formula can be seen as a generalization of the Burau representation formula of the Alexander polynomial and has more topological flavor.

A key result is Theorem \ref{theorem:key}, where we show that truncated Lawrence's representation is identified with a certain quantum $\sltwo$ braid group representation, defined for the case the quantum parameter $q$ is put as $2N$-th root of unity. This generalizes a relation between Lawrence's representation and generic quantum $\sltwo$-representation \cite{koh,i1}, and is interesting in its own right.

Unfortunately, unlike the Burau representation formula, our formula is not completely topological since it heavily depends on a particular presentation (closed braid representative) of knots, and we cannot see its topological invariance directly, although it suggests a relationship to the topology of abelian coverings of the configuration space.

We remark that the colored Alexander invariant $\Phi^{N}_{K}(\lambda)$ at $\lambda=(N-1)$ is equal to the $N$-colored Jones polynomial at $N$-th root of unity \cite{mumu}, which in turn, is equal to Kashaev's invariant derived from the quantum dilogarithm function \cite{kas}. Thus, our formula also brings a new prospect for Kashaev's invariant and the volume conjecture.

\section*{Acknowledgements}
The author gratefully thanks Jun Murakami who suggests a generalization of the author's previous work on colored Jones polynomial to the colored Alexander invariants. He also wish to thank Tomotada Ohtsuki for stimulating discussion.
The author was supported by JSPS KAKENHI Grant Numbers 25887030 and 15K17540.

\section{Quantum $\sltwo$ representation and colored Alexander invariant}
\label{section:q}

\subsection{Generic quantum $\sltwo$ representation}

For a complex parameter $q \in \C-\{0,1\}$, we define 
\[ [a]_{q}= \frac{q^{a}-q^{-a}}{q-q^{-1}},\quad [a;i]_{q} = \begin{cases} 1 & (i=0), \\
[a]_{q} \cdots [a+1-i]_{q} & (i>0).
\end{cases}
 \]
\[ [a]_{q}! = [a;a]_{q} = [a]_{q}[a-1]_{q}\cdots[1]_{q}, \quad {a \brack b}_{q
}= \frac{[a]_{q}!}{[b]_{q}![a-b]_{q}!}.\]

Let $U_{q}(\sltwo)$ be the quantum $\sltwo$, defined by
\[ U_{q}(\sltwo) = \left\langle K,K^{-1},E,F \: 
\begin{array}{|cc} KK^{-1}=K^{-1}K=1, & [E,F] = \frac{K-K^{-1}}{q-q^{-1}}\\ KEK^{-1}=q^{2}E, & KFK^{-1}=q^{-2}F \end{array} \right\rangle.\]

For $\lambda \in \C^{*}$, let $\widehat{V_{\lambda}}$ be a $\C$-vector space spanned by $\{\widehat{v}^{\lambda}_{0},\widehat{v}^{\lambda}_{1},\ldots,\}$, equipped with an $U_{q}(\sltwo)$-module structure by
\begin{gather*}
%\label{eqn:verma}
\begin{cases}
 K \widehat{v}^{\lambda}_{i} = q^{(\lambda -2i)} \widehat{v}^{\lambda}_{i} \\
 E \widehat{v}^{\lambda}_{i} = \widehat{v}^{\lambda}_{i-1} \\
 F \widehat{v}^{\lambda}_{i} = [i+1]_{q}[\lambda-i]_{q}\widehat{v}^{\lambda}_{i+1}.
\end{cases}
\end{gather*}
Here $[\lambda-i]_{q} = \frac{q^{\lambda-i}-q^{-(\lambda-i)}}{q-q^{-1}}$. 
We define the $R$-operator $\mathsf{R}$ by
\[ \mathsf{R} = \mathsf{R_{\lambda,\mu}} = q^{-\half \lambda\mu} T \circ \mathcal{R} : \widehat{V_{\lambda}} \otimes \widehat{V_{\mu}} \rightarrow \widehat{V_{\mu}} \otimes \widehat{V_{\lambda}}, \]
where $T$ is the transposition $T(v\otimes w) = w\otimes v$ and $\mathcal{R}$ is a universal $R$-matrix of $U_{q}(\sltwo)$.
When $\mu=\lambda$, $\mathsf{R}$ gives rise to a braid group representation 
\[ \varphi_{\widehat{V_{\lambda}}}: B_{n} \rightarrow \GL(\widehat{V_{\lambda}}{}^{\otimes n}), \quad \varphi(\sigma_{i}) = \id^{\otimes (i-1)} \otimes \mathsf{R} \otimes \id^{\otimes (n-1-i)}. \]

Let $\widehat{V_{n,m}}=\widehat{V_{n,m}}(\lambda)$ be a subspace of $\widehat{V_{\lambda}}{}^{\otimes n}$ spanned by $\{ \widehat{v}^{\lambda}_{e_1}\otimes \cdots \otimes \widehat{v}^{\lambda}_{e_n}\: | \: e_1+\cdots+e_n=m\}$, and let $\widehat{W_{n,m}} = \widehat{W_{n,m}}(\lambda) = \Ker E \cap \widehat{V_{n,m}}$. 
Then $\widehat{V_{n,m}}$ and $\widehat{W_{n,m}}$ are finite-dimensional braid group representations with dimension $\binom{n+m-1}{m}$ and $\binom{n+m-2}{m}$, respectively.

To relate $\widehat{V_{\lambda}}$ with irreducible finite dimensional $U_{q}(\sltwo)$-modules, let $V_{\lambda}$ be subspace of $\widehat{V_{\lambda}}$ spanned by $\{ v^{\lambda}_{0}, v^{\lambda}_{1}\ldots\}$, where we define $v_{j}^{\lambda}= [\lambda;j]_{q} \widehat{v}^{\lambda}_{j}$. Then $V_{\lambda}$ is a sub $U_{q}(\sltwo)$-module with much familiar action
\begin{gather}
\label{eqn:verma0}
\begin{cases}
 K v^{\lambda}_{i} = q^{(\lambda -2i)} v^{\lambda}_{i} \\
 E v^{\lambda}_{i} = [\lambda+1-i]_{q}v^{\lambda}_{i-1} \\
 F v^{\lambda}_{i} = [i+1]_{q}v^{\lambda}_{i+1}.
\end{cases}
\end{gather}
If $\lambda$ is equal to a positive integer $\alpha-1$, then $v^{\lambda}_{i}=0$ for $i\geq \alpha$, and $V_{\lambda}$ is nothing but the $\alpha$-dimensional irreducible $U_{q}(\sltwo)$-module. 

As in the case $\widehat{V_{\lambda}}$, $\mathsf{R}$ leads to the braid group representation $\varphi_{V_{\lambda}}: B_{n} \rightarrow \GL(V_{\lambda}^{\otimes n})$,
% \quad \varphi(\sigma_{i}) = \id^{\otimes (i-1)} \otimes \mathsf{R} \otimes \id^{\otimes (n-1-i)}, \]
and $V_{n,m} = \widehat{V_{n,m}} \cap V_{\lambda}^{\otimes n}$ and $W_{n,m}= \widehat{W_{n,m}} \cap V_{\lambda}^{\otimes n}$ also give rise to braid group representations.

In the definition of $\mathsf{R}$, the term $q^{-\half \lambda \mu}$ corresponds to a part of the framing correction (see also Remark \ref{remark:framing}). Thanks to this modification, the action of $\mathsf{R}_{\lambda,\lambda}$ is written by
\begin{equation}
\label{eqn:raction}
\mathsf{R}(\widehat{v}^{\lambda}_{i} \otimes \widehat{v}^{\lambda}_{j}) = 
 q^{-\lambda(i+j)}\sum_{n=0}^{i}q^{2(i-n)(j+n)}q^{\frac{n(n-1)}{2}}{ n+j\brack j}_{q}[\lambda-i;n]_{q}(q-q^{-1})^{n}\, \widehat{v}^{\lambda}_{j+n} \otimes \widehat{v}^{\lambda}_{i-n}.
\end{equation}

A remarkable feature is that each coefficient in (\ref{eqn:raction}) is a Laurent polynomial of $q$ and $q^{\lambda}$. Thus by regarding $q$ and $q^{\lambda}$ as abstract variables, one can define the braid group representation $\widehat{V_{\lambda}}{}^{\otimes n}$ over the Laurent polynomial ring $\Z[q^{\pm 1}, q^{\pm \lambda}]$, which we call a \emph{generic quantum $\sltwo$-representation} (see \cite{jk,i1} for details). 

We will consider the following two kind of genericity conditions.
\begin{definition}
We say that \emph{$\lambda$ is generic with respect to $q$} if $
[\lambda-i]_{q}\neq 0 \text{ for all } i \in \Z$. 
We also say that \emph{$q$ and $\lambda$ are fully generic} if the subset $\{1,q,q^{\lambda}\}$ is algebraically independent.
\end{definition}

\begin{lemma}
\label{lemma:generic}
{$ $}
\begin{enumerate}
\item[(i)] If $\lambda$ is generic with respect to $q$, then $\widehat{V_{\lambda}}$ and $V_{\lambda}$ are isomorphic as $U_{q}(\sltwo)$-modules. In particular, the braid group representations $\widehat{V_{\lambda}}{}^{\otimes n}$ and $\widehat{V_{\lambda}}$, ($\widehat{V_{n,m}}$ and $V_{n,m}$, $\widehat{W_{n,m}}$ and $W_{n,m}$) are isomorphic.
\item[(ii)] If $\lambda$ and $q$ are fully generic, then the braid group representation $\widehat{V_{\lambda}}{}^{\otimes n}$ splits as
\[ \widehat{V_{\lambda}}{}^{\otimes n} \cong \bigoplus_{m=0}^{\infty} \widehat{V_{n,m}} \cong \bigoplus_{m=0}^{\infty} \left(\bigoplus_{k=0}^{m} F^{m-k}\widehat{W_{n,k}} \right). \]
\end{enumerate}
\end{lemma}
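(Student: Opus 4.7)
For part (i), the plan is simply to note that by definition $v_{j}^{\lambda} = [\lambda;j]_{q}\,\widehat{v}_{j}^{\lambda}$, and the hypothesis $[\lambda - i]_{q} \neq 0$ for all $i$ forces $[\lambda;j]_{q} = [\lambda]_{q}[\lambda-1]_{q}\cdots[\lambda-j+1]_{q}$ to be nonzero for every $j \geq 1$. Thus $\{v_{j}^{\lambda}\}$ and $\{\widehat{v}_{j}^{\lambda}\}$ differ only by nonzero rescaling and span the same subspace of $\widehat{V_{\lambda}}$, so $V_{\lambda} = \widehat{V_{\lambda}}$ as $U_{q}(\sltwo)$-modules. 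The asserted braid representation isomorphisms on tensor powers, on weight spaces $\widehat{V_{n,m}}$, and on the kernel subspaces $\widehat{W_{n,m}}$ follow immediately by restriction.

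For part (ii), I would first observe that the initial isomorphism $\widehat{V_{\lambda}}{}^{\otimes n} \cong \bigoplus_{m\geq 0}\widehat{V_{n,m}}$ is just the weight decomposition for $\Delta^{(n-1)}(K)$; it respects the braid action because the universal $R$-matrix satisfies $\mathcal{R}\Delta(x) = \Delta^{\mathrm{op}}(x)\mathcal{R}$ and $K$ is group-like, so $\mathsf{R}$ commutes with $K\otimes K$. By the same intertwining property the braid action on $\widehat{V_{\lambda}}{}^{\otimes n}$ commutes with every element of $U_{q}(\sltwo)$, whence each subspace $F^{m-k}\widehat{W_{n,k}}$ is automatically a braid subrepresentation, and the second isomorphism is reduced to a pure vector space statement.

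The substantive input is the standard commutator identity
\[
 [E,F^{j}] = [j]_{q}\,F^{j-1}\,\frac{q^{-(j-1)}K - q^{j-1}K^{-1}}{q-q^{-1}},
\]
which, evaluated on $w \in \widehat{W_{n,k}}$ (where $\Delta^{(n-1)}(K)$ acts as $q^{n\lambda-2k}$), gives $EF^{j}w = [j]_{q}\,[n\lambda-2k-j+1]_{q}\,F^{j-1}w$. Under the fully generic hypothesis each scalar $[n\lambda-2k-j+1]_{q}$ is a nonzero element of $\Z[q^{\pm 1},q^{\pm\lambda}]$. Combined with the hockey-stick identity
\[
 \dim \widehat{V_{n,m}} = \binom{n+m-1}{m} = \sum_{k=0}^{m}\binom{n+k-2}{k} = \sum_{k=0}^{m}\dim \widehat{W_{n,k}},
\]
it suffices to prove that the sum map $(w_{0},\ldots,w_{m}) \mapsto \sum_{k=0}^{m} F^{m-k}w_{k}$ is injective. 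My plan is induction on $m$: if the sum vanishes, applying $E$ yields $\sum_{k<m}[m-k]_{q}[n\lambda-m-k+1]_{q}\,F^{m-1-k}w_{k} = 0$ in $\widehat{V_{n,m-1}}$, and the inductive decomposition together with the nonvanishing of each prefactor forces $w_{k}=0$ for $k<m$, after which $w_{m}=0$ is immediate.

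The main obstacle is this inductive injectivity step; conceptually it expresses that for generic parameters $\widehat{V_{\lambda}}$ is the Verma module of highest weight $\lambda$, and $\widehat{V_{\lambda}}{}^{\otimes n}$ decomposes as a direct sum of Verma submodules generated by highest weight vectors lying in the $\widehat{W_{n,k}}$. Everything else---the vector-space weight decomposition, the braid-invariance of the summands, and the derivation of the commutator identity---is formal or a standard calculation.
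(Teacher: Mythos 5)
Your proof is correct, but it takes a genuinely different route from the paper: the paper disposes of (i) with ``follows from the definition of $V_{\lambda}$'' (which is exactly your rescaling observation, since genericity makes every $[\lambda;j]_{q}$ a product of nonzero factors $[\lambda-i]_{q}$) and simply cites \cite[Lemma 13]{jk} for (ii), whereas you supply a self-contained argument. Your argument for (ii) is sound: the braid-equivariance of the summands does follow from $\mathsf{R}$ commuting with the iterated coproduct action, your commutator identity matches the one the paper itself uses in the proof of Lemma \ref{lemma:coeff2}, the scalar $[m-k]_{q}[n\lambda-m-k+1]_{q}$ is indeed nonzero when $\{1,q,q^{\lambda}\}$ is algebraically independent, and the induction on $m$ plus the hockey-stick identity closes the dimension count. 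The one input you take for granted is $\dim\widehat{W_{n,k}}=\binom{n+k-2}{k}$, i.e.\ the surjectivity of $E:\widehat{V_{n,k}}\to\widehat{V_{n,k-1}}$; the paper asserts this just before the lemma, so it is fair to use, but it is worth noting that your own argument actually yields it for free: the trivial bound $\dim\widehat{W_{n,k}}\geq\binom{n+k-1}{k}-\binom{n+k-2}{k-1}=\binom{n+k-2}{k}$ together with your injectivity statement and the hockey-stick identity forces equality. What the citation buys the paper is brevity; what your version buys is a transparent picture (each $\widehat{W_{n,k}}$ is a space of highest weight vectors generating a Verma summand) and independence from \cite{jk}, at the cost of about a page.
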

\begin{proof}
(i) follows from the definition of $V_{\lambda}$. (ii) is \cite[Lemma 13]{jk}. (We remark that the modules $\widehat{V_{n,m}}$ and $\widehat{W_{n,m}}$ in this paper correspond to $V_{n,m}$ and $W_{n,m}$ in \cite{jk}).
\end{proof}

From now on, we will always assume that $\lambda$ is generic with respect to $q$ so that we do not need to distinguish $\widehat{V_{\lambda}}$ with $V_{\lambda}$. We will mainly work on $V_{\lambda}$.

\subsection{The case $q$ is a root of unity}
Let us put $q$ as the $2N$-th root of unity $\zeta=\zeta_{N}=\exp(\frac{2\pi\sqrt{-1}}{2N})$. As we remarked in the previous section, we assume that $\lambda$ is generic with respect to $\zeta$.

By (\ref{eqn:verma0}), $\{v^{\lambda}_0,\ldots, v^{\lambda}_{N-1}\} \subset V_{\lambda}$ spans an irreducible $N$-dimensional irreducible $U_{\zeta}(\sltwo)$-module, a central deformation of the $N$-dimensional irreducible $U_{q}(\sltwo)$ module \cite{mu2}. 
We denote this $N$-dimensional irreducible $U_{\zeta}(\sltwo)$-module by $U_{N}(\lambda)$, and the corresponding braid group representation by
$ \varphi_{U_{N}}: B_{n} \rightarrow \GL(U_{N}(\lambda)^{\otimes n})$.

As an $U_{\zeta}(\sltwo)$-module, the tensor product decomposes as 
\begin{equation}
\label{eqn:split}
U_{N}(\lambda) \otimes U_{N}(\mu) \cong \bigoplus_{i=0}^{N-1}U_{N}(\lambda +\mu -2i)
\end{equation}
so iterated use of (\ref{eqn:split}) gives a decomposition as an $U_{\zeta}(\sltwo)$ module
\begin{equation}
\label{eqn:split2}
U_{N}(\lambda)^{\otimes n} \cong \bigoplus_{i=0}^{N-1} T_{i} \otimes U_{N}(n \lambda -2i).
\end{equation}
Here $T_{i}$ is the intertwiner space $\End_{U_{\zeta}(\sltwo)}(U_{N}(n\lambda-2i), U_{N}(\lambda)^{\otimes n})$, the set of endomorphisms equivariant with respect to the $U_{\zeta}(\sltwo)$-actions. The dimension of $T_{i}$ is the multiplicity of the direct summands.

The $U_{\zeta}(\sltwo)$-module $U_{N}(\lambda)$ is generated by a highest weight vector $v^{\lambda}_{0}$, so we identify $T_i$ with the subspace consisting of highest weight vectors
\begin{equation}
\label{eqn:split3} T_{i} = \{v \in U_{N}(\lambda)^{\otimes n}\: | \: Ev = 0, Kv = \zeta^{n\lambda - 2i} v\}.
\end{equation}
In particular, we may view the decomposition (\ref{eqn:split2}) as
\begin{equation}
U_{N}(\lambda)^{\otimes n} = \bigoplus_{i=0}^{N-1} \left( T_{i} \oplus F T_{i} \oplus \cdots \oplus F^{N-1}T_{i} \right).
\end{equation}

Next we look at the braid group action. As in the previous section, let $X^{N}_{n,m}=X^{N}_{n,m}(\lambda)$ be the subspace of $U_{N}(\lambda)^{\otimes n}$ spanned by $\{v^{\lambda}_{e_1}\otimes \cdots \otimes v^{\lambda}_{e_n}\: | \: e_1+\cdots+e_n=m\}$, and let $Y^{N}_{n,m} = \Ker E \cap X^{N}_{n,m}$. Both $X^{N}_{n,m}$ and $Y^{N}_{n,m}$ are braid group representations, and as a braid group representation, $U_{N}(\lambda)^{\otimes n}$ splits as 
\[ U_{N}(\lambda)^{\otimes n} \cong \bigoplus_{m=0}^{n(N-1)} X^{N}_{n,m}.\]

By definition, $K v = \zeta^{n\lambda - 2m} v$ for $v \in X^{N}_{n,m}$. Hence by (\ref{eqn:split3}) the intertwiner space $T_i$ in (\ref{eqn:split2}) is identified with the direct sums of $Y_{n,m}^{N}$.
\begin{equation}
\label{eqn:ttoy}
 T_{i} = \{v \in \bigoplus_{m=0}^{n(N-1)} X^{N}_{n,m} \cong U_{N}(\lambda)^{\otimes n}\: | \: Ev=0, \ Kv = \zeta^{\lambda - 2i} v\}  = \bigoplus_{j=0}^{n-1} Y^{N}_{n,i+(N-1)j}.
\end{equation}
Summarizing, the decomposition of $U_{N}(\lambda)^{\otimes n}$ as $U_{\zeta}(\sltwo)$-representation (\ref{eqn:split2}) is written in terms of $Y_{n,m}^{N}$ as 
\begin{equation}
U_{N}(\lambda)^{\otimes n} = \bigoplus_{i=0}^{N-1} \left( \bigoplus_{k=0}^{N-1} F^{k}\left( \bigoplus_{j=0}^{n-1} Y^{N}_{n,i+(N-1)j}\right) \right).
\end{equation}

\subsection{Colored Alexander invariant}

The colored Alexander invariant is the quantum $\sltwo$ invariant coming from $U_{N}(\lambda)$. However, the quantum trace of the quantum representation $\varphi_{U_{N}}$ is trivial so we need a trick (see \cite{gpmt} for a general framework).

For a finite dimensional free $\C$-vector space $V$, $\End(V) \cong V\otimes V^{*}$, where $V^{*}$ is the dual of $V$, and $\trace: \End(V) \rightarrow \C$ is identified with the contraction. 
Let $V^{\otimes n} = V_1 \otimes V_2\otimes \cdots \otimes V_n$ be the tensor products of $n$ copies of $V$.
The \emph{partial trace}
\[ \trace_{2,\ldots,n}: \End(V^{\otimes n}) \rightarrow \End(V) = \End(V_1) \]
is the map defined by the composition of the following natural maps,
\[
\xymatrix{
\End(V^{\otimes n}) \ar[r]^-{\cong} \ar[d]^-{ \trace_{2,\ldots,n}}
& (V^{\otimes n}) \otimes (V^{*}{}^{\otimes n}) \ar[r]^-{\cong}
& (V_{1}\otimes V_{1}^{*})\otimes(V_{2}\otimes V_{2}^{*}) \otimes \cdots \otimes (V_{n}\otimes V_{n}^{*}) \ar[d]_-{\id \otimes \trace \otimes \cdots \otimes \trace}^-{\; (\text{contraction})}\\
\End(V) & \ar@{=}[l]\End(V_1)& \ar[l]^-{\cong}(V_{1}\otimes V_{1}^{*})
}
\]

Let $K$ be an oriented knot in $S^{3}$ represented as the closure of an $n$-braid $\beta$.
We cut the first strand of $\widehat{\beta}$ to get an $(1,1)$-tangle $T_{\beta}$. Let $Q^{U_{N}}(T_{\beta}): U_{N}(\lambda) \rightarrow U_{N}(\lambda)$ be an operator invariant of the tangle $T_{\beta}$ (see \cite[Chapter 3]{oht}), defined by 
\begin{equation}
\label{eqn:cadef}
 Q^{U_{N}}(T_{\beta}) = \zeta^{(N-1)\lambda e(\beta)}\trace_{2,\ldots,n}((\id \otimes h^{\otimes (n-1)})\circ\varphi_{U_N}(\beta)):  U_{N}(\lambda) \rightarrow U_{N}(\lambda) 
\end{equation}
where $h=K^{1-N}: U_{N}(\lambda) \rightarrow U_{N}(\lambda)$ is given by $h(v^{\lambda}_i) = \zeta^{-(N-1)\lambda +2(N-1)i}v^{\lambda}_i$, and $e: B_{n} \rightarrow \Z$ is the exponent sum homomorphism, defined by $e(\sigma_{i}^{\pm 1}) = \pm 1$.

\begin{remark}
\label{remark:framing}
By \cite{mu2} the framing correction is given by $\zeta^{-\half\lambda(\lambda+2-2N)e(\beta)}$, but as we noted in the previous section, the part of the framing correction $\zeta^{-\half \lambda^{2} e(\beta)}$ is already included in the definition of $\mathsf{R}$-operator. This explains the framing correction term $\zeta^{(N-1)\lambda e(\beta)}$ in (\ref{eqn:cadef}).
\end{remark}

Since $U_{N}(\lambda)$ is irreducible, $Q^{U_{N}}(T_{\beta})$ is a scalar multiple of the identity
\begin{equation}
\label{eqn:ca}
Q^{U_{N}}(T_{\beta}) = \Phi_{K}^{N}(\lambda) \id.
\end{equation}
It turns out $\Phi^{N}_{K}(\lambda)$ is independent of a choice of a closed braid representative.

\begin{definition}[Colored Alexander invariant \cite{ado},\cite{mu2}]
The \emph{colored Alexander invariant} of color $N \in \{2,3,\ldots\}$ is the scalar $\Phi_{K}^{N}(\lambda)$ in (\ref{eqn:ca}), viewed as a function of the variable $\lambda$.
\end{definition}

It is convenient to put $x=\zeta^{-2\lambda}$ and write $CA_{K}^{N}(x)=\Phi_{K}^{N}(\lambda)$, a rational function of the variable $x^{\half}$ (Note that by (\ref{eqn:raction}) and (\ref{eqn:cadef}), $CA_{K}^{N}(x)$ is indeed a rational function of $x^{\half}$.)
We call $CA_{K}^{N}(x)$ the \emph{colored Alexander polynomial}, whose name is justified by the fact $CA^{K}_{2}(x) = \Delta_{K}(x)=\nabla_{K}(x^{\frac{1}{2}}-x^{-\frac{1}{2}})$, where $\nabla_{K}(t)$ denotes the Conway polynomial (See \cite{mu1} and Example \ref{exam:N=2}).

\section{Homological representations}
\label{sec:tLaw}

\subsection{Lawrence's representation}

We review a homological braid group representation $L_{n,m}$ which we call {\emph Lawrence's representation}. See \cite{law}, \cite[Section 3]{i1}, \cite[Appendix]{i2}. In the case $m=2$, it is known as \emph{Lawrence-Krammer-Bigelow representation} studied in \cite{kra,big}, and in the case $m=1$ it is nothing but the reduced Burau representation. 

We identify the braid group $B_{n}$ with the mapping class group of the $n$-punctured disc 
\[ D_n=\{|z| \in \C \: | \: |z| \leq n+1 \}-\{1,2,\ldots,n\} = D^{2}-\{p_1,\ldots,p_n \}, \]
so that the standard generator $\sigma_i$ of $B_{n}$ corresponds to the right-handed half Dehn twist along the real line segment $[i,i+1] \subset D_n$. 

Let $C_{n,m}$ be the unordered configuration space of $m$-points in $D_n$,
\[ C_{n,m}=\{ (z_{1},\ldots,z_{m}) \in D_{n}^{m} \: | \: z_{i}\neq z_{j} \textrm{ for } i \neq j\} \slash S_m .\]
Here $S_m$ denotes the symmetric group acting as a permutation of the indices.
Fix a sufficiently small $\varepsilon >0$, and let $d_{i}=(n+1)e^{(\frac{3}{2}+ i\varepsilon)\pi \sqrt{-1}}  \in \partial D_{n}$. We use $\mathbf{d} = \{d_{1},\ldots,d_{m}\}$ as a base point of $C_{n,m}$.
It is known that $H_{1}(C_{n,m}) = \Z^{n} \oplus \Z$, where the first $n$ components are spanned by the meridians of the hyperplanes $\{z_1=p_i\}$ $(i=1,\ldots, n)$, and the last component is spanned by the meridian of the discriminant $\bigcup_{1\leq i<j \leq n} \{z_{i}=z_{j}\}$. 

Let $\alpha$ be the homomorphism
\[
\xymatrix{ \alpha: \pi_{1}(C_{n,m}) \ar[rr]^-{\textrm{Hurewicz}} & &
H_{1}(C_{n,m};\Z) = \Z^{n} \oplus \Z \ar[r]^-{C} &\Z \oplus \Z = \langle x \rangle\oplus \langle d \rangle }
\]
where the first map is the Hurewicz homomorphism and the second map is defined by $C(x_1,\ldots,x_n,d) =( x_1+\cdots + x_n, d)$. Let $\pi: \widetilde{C_{n,m}} \rightarrow C_{n,m}$ be the covering that corresponds to $\Ker \alpha$. Take a point $\widetilde{\mathbf{d}} \in \pi^{-1}(\mathbf{d}) \subset \widetilde{C_{n,m}}$ and we use $\widetilde{\mathbf{d}}$ as a base point of $\widetilde{C_{n,m}}$.
By identifying $x$ and $d$ as deck translations,
$H_{m}(\widetilde{C_{n,m}};\Z)$ is a free $\Z[x^{\pm 1},d^{\pm1}]$-module of rank $\binom{n+m-2}{m}$. Moreover, since $\Ker \alpha$ is invariant under the $B_{n}$ action, $B_{n}$ acts on $H_{m}(\widetilde{C_{n,m}};\Z)$ as $\Z[x^{\pm 1},d^{\pm1}]$-module automorphisms.

Lawrence's representation is a variant of $H_{m}(\widetilde{C_{n,m}};\Z)$. It is a sub-representation $\mathcal{H}_{n,m}$ of $H^{lf}_{m}(\widetilde{C_{n,m}};\Z)$, the homology of locally finite chains.

Let $\mathsf{Y}$ be the $Y$-shaped graph with four vertices $c,r,v_1,v_2$ and oriented edges as shown in Figure \ref{fig:multiforks}(1).
A {\em fork} $F$ with the base point $d \in \partial D_{n}$ is an embedded image of $\mathsf{Y}$ into $D^2=\{z \in \C \: | \: |z| \leq n+1\}$ such that:
\begin{itemize}
\item All points of $\mathsf{Y}\setminus \{r,v_1,v_2\}$ are mapped to the interior of~$D_n$.
\item The vertex $r$ is mapped to $d$.
\item The other two external vertices $v_{1}$ and $v_{2}$ are mapped to the puncture points.
\end{itemize}
The image of the edge $[r,c]$, and the image of $[v_1,v_2]=[v_{1},c] \cup [c,v_{2}]$ viewed as a single oriented arc, are denoted by $H(F)$ and $T(F)$ respectively. We call $H(F)$ and $T(F)$ the \emph{handle} and the \emph{tine} of $F$.

\begin{figure}[htbp]
\centerline{\includegraphics[bb=123 618 480 718, width=120mm]{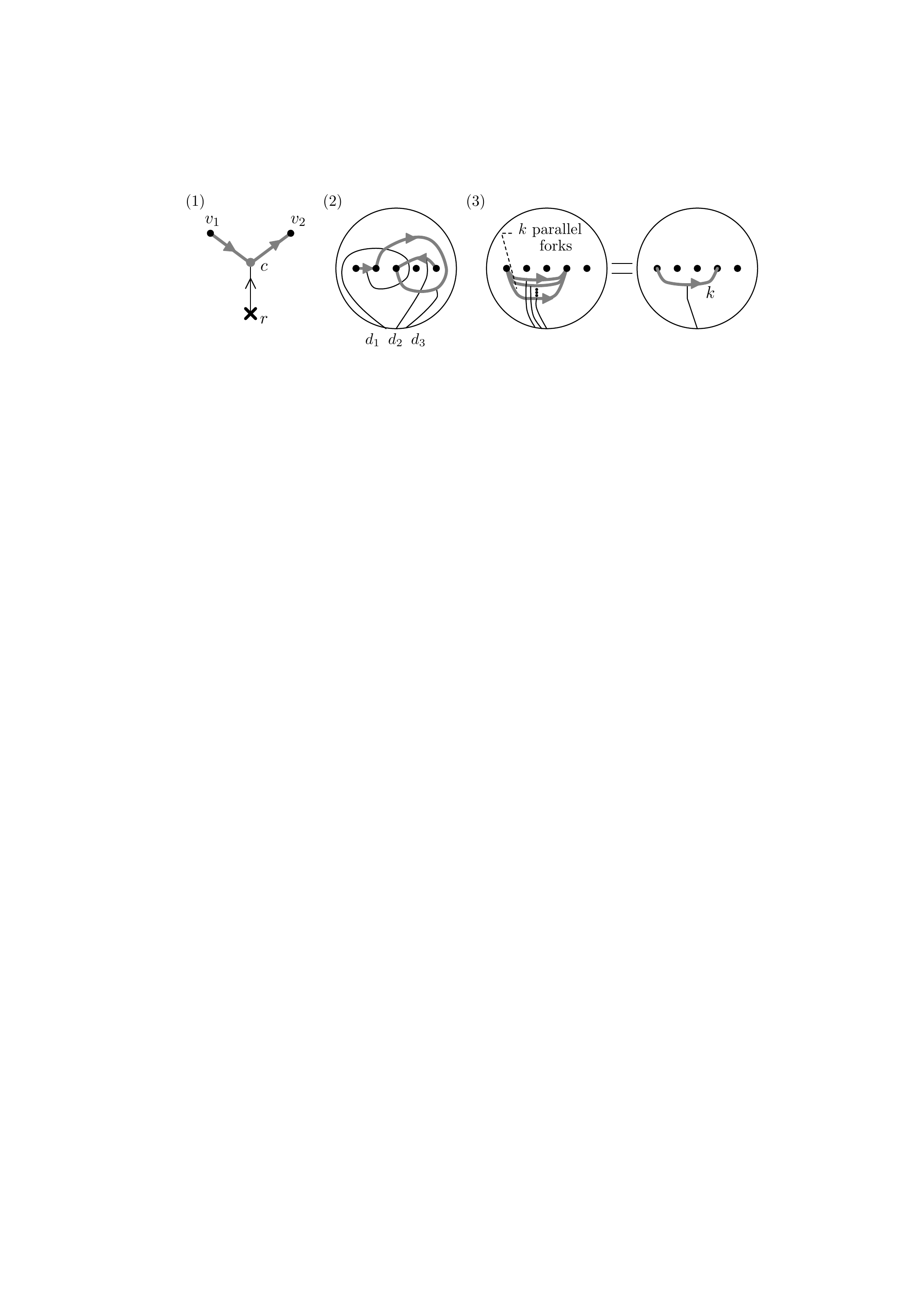}}
\caption{Forks and multiforks: to distinguish tines and handle, we often write tine of forks by a bold gray line. }
 \label{fig:multiforks}
\end{figure}

A {\em multifork} of dimension $m$ is an ordered tuples of $m$ forks $\bF= (F_{1},\ldots,F_{m})$ such that 
\begin{itemize}
\item $F_{i}$ is a fork based on $d_{i}$.
\item $T(F_{i}) \cap T(F_{j}) \cap D_{n} = \emptyset$ $(i\neq j)$.
\item $H(F_{i}) \cap H(F_{j}) = \emptyset$ $(i \neq j)$.
\end{itemize}
Figure \ref{fig:multiforks} (2) illustrates an example of a multifork of dimension $3$. We represent a multifork consisting of $k$ parallel forks by drawing a single fork labelled by $k$, as shown in Figure \ref{fig:multiforks} (3).

Let $\mathcal{E}_{n,m} =\{(e_{1},\ldots,e_{n-1}) \in \Z^{n-1}_{\geq 0} \: | \: e_1+\cdots +e_{n-1}=m \}$. For each $\be \in \mathcal{E}_{n,m}$, we assign a multifork $\bF_{\be}=\{F_{1},\ldots, F_{m}\}$ in Figure \ref{fig:standmultiforks}, which we call a {\em standard multifork}.
\begin{figure}[htbp]
\includegraphics[bb=191 614 388 710,width=65mm]{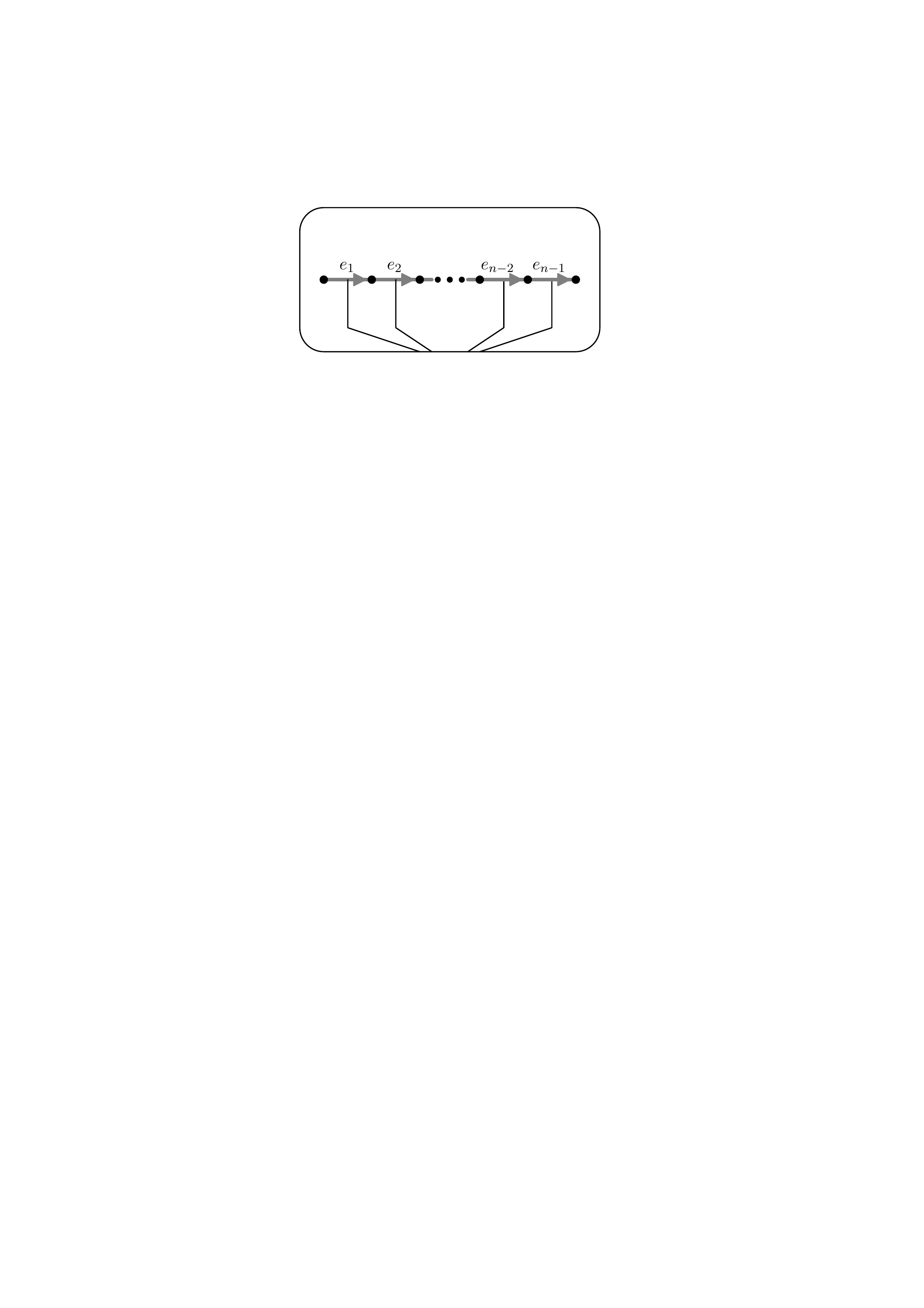}
\caption{Standard multifork $\bF_{\be}$ for $\be=(e_1,\ldots,e_{n-1}) \in \mathcal{E}_{n,m}$}
 \label{fig:standmultiforks}
\end{figure}

A multifork $\bF$ of dimension $m$ gives rise to a homology class $H_{m}^{lf}(\widetilde{C_{n,m}};\Z)$ in a following manner. Each handle $H(F_{i})$ of $\bF$ is seen as a path $\gamma_{i} \co [0,1] \rightarrow D_{n}$, so the handles of $\bF$ defines a path $H(\bF) = \{\gamma_{1},\ldots,\gamma_{m}\} \co [0,1] \rightarrow C_{n,m}$.
Let $\widetilde{H(\bF)} \co [0,1] \rightarrow \widetilde{C_{n,m}}$ be a lift of $H(\bF)$, taken so that $\widetilde{H(\bF)}(0)=\widetilde{\mathbf{d}}$. Let $\Sigma(\bF) = \left\{ \{z_{1},\ldots,z_{m} \} \in C_{n,m} \: | \: z_{i} \in T(F_{i}) \right\}$ and let $\widetilde{\Sigma}(\bF)$ be the connected component of $\pi^{-1}(\Sigma(\bF))$ containing $\widetilde{H(\bF)}(1)$. We equip an orientation of $\widetilde{\Sigma}(\bF)$ so that a canonical homeomorphism $T(F_1) \times \cdots \times T(F_m) \rightarrow \Sigma(\bF)$ is orientation preserving. Then $\widetilde{\Sigma}(\bF)$ represents an element of $H_{m}^{lf}(\widetilde{C_{n,m}};\Z)$.
By abuse of notation, we use $\bF$ to represent both multifork and the homology class $[\widetilde{\Sigma}(\bF)]$.

We graphically express relations among homology classes represented by multiforks by Figure \ref{fig:forkrule}, which we call \emph{fork rules}.

\begin{figure}[htbp]
\centerline{\includegraphics[bb=117 495 482 717, width=120mm]{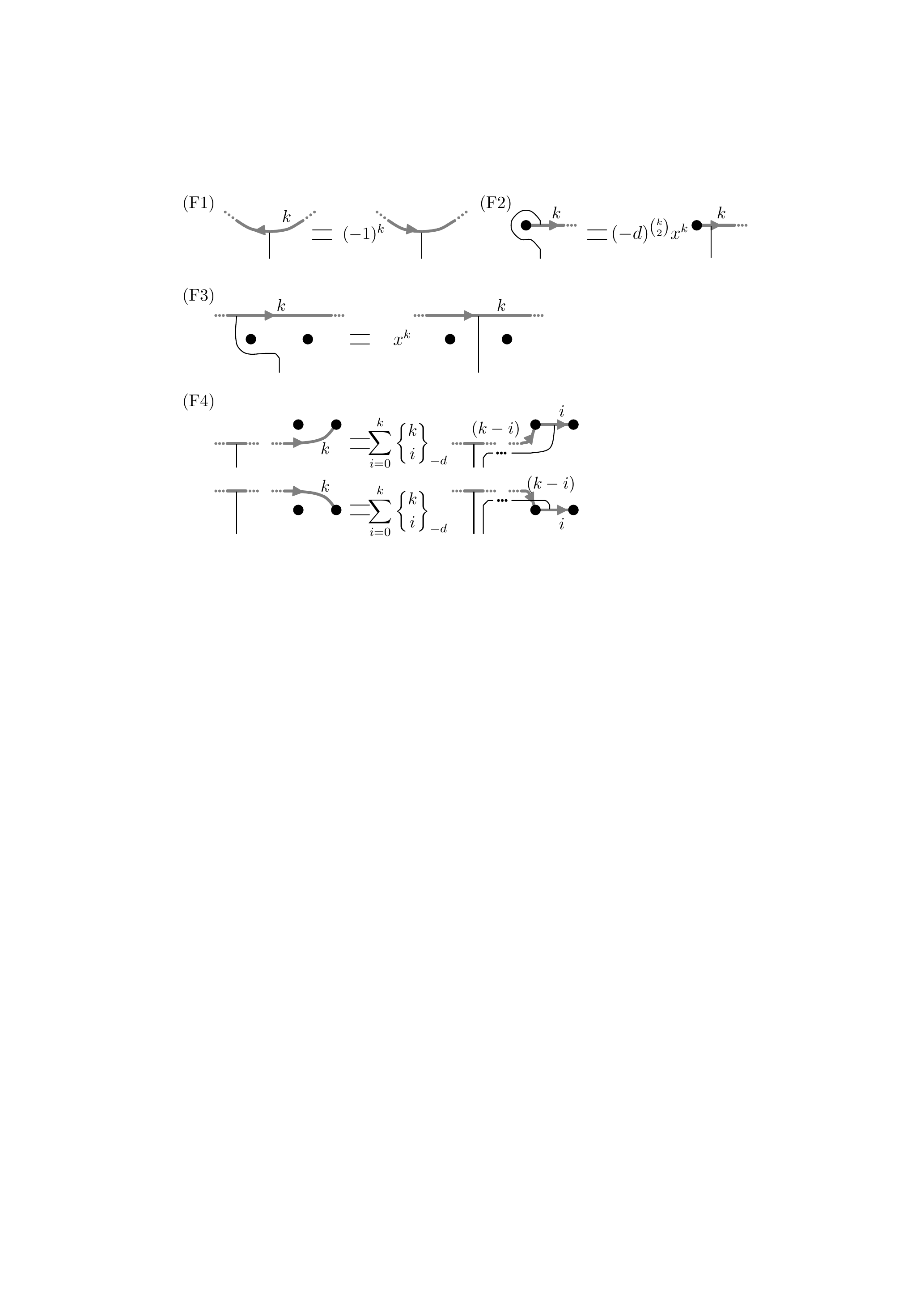}}
\caption{Geometric rewriting formula for multiforks. Here $\{a\}_{q} = \frac{q^{a}-1}{q-1}$ is a different version of $q$-integer, and ${a \brace b}_{q} = \frac{\{a\}_{q}!}{\{a-b\}_{q}!\{b\}_{q}!}$ is the version of $q$-binomial coefficient.}
 \label{fig:forkrule}
\end{figure}

Let $\mathcal{H}_{n,m}$ be the subspace of $H_{m}^{lf}(\widetilde{C_{n,m}};\Z)$ spanned by all multiforks. The set of standard multiforks forms a free basis of $\mathcal{H}_{n,m}$. $\mathcal{H}_{n,m}$ is invariant under the $B_{n}$ action, so we have the braid group representation  
\[ L_{n,m}: B_{n} \rightarrow \GL(\mathcal{H}_{n,m}) = \GL \left(\binom{n+m-2}{m};\Z[x^{\pm1},d^{\pm 1}] \right) \]
which we call \emph{Lawrence's representation}.

When $x$ and $d$ are put as fully generic complex numbers, all representations $\mathcal{H}_{n,m}$, $H^{lf}_{m}(\widetilde{C_{n,m}};\Z)$ and $H_{m}(\widetilde{C_{n,m}};\Z)$ are the same, but at non-generic parameters they might be different.

%We remark that somewhat confusing minus signs of $d$ in the fork rule (F2), (F4) come from the orientation of $\widetilde{\Sigma}(\bF)$. In the definition of multiforks we used an \emph{ordered} tuple of forks. By definition of the orientation, for a multifork $\bF_{\tau} = (F_{\tau(1)}, \ldots, F_{\tau(m)})$ obtained by permuting the order by $\tau \in S_m$, we have $\bF_{\tau} = \textsf{sgn}(\tau) \bF \in H_{m}^{lf}(\widetilde{C_{n,m}};\Z)$. The minus sign reflects this phenomenon.

\begin{example}
Here we give a sample graphical calculation using the fork rules.
\begin{align*}
&\sigma_{1}\left(
\begin{minipage}{18mm}
\includegraphics*[width=18mm,bb=152 635 243 709]{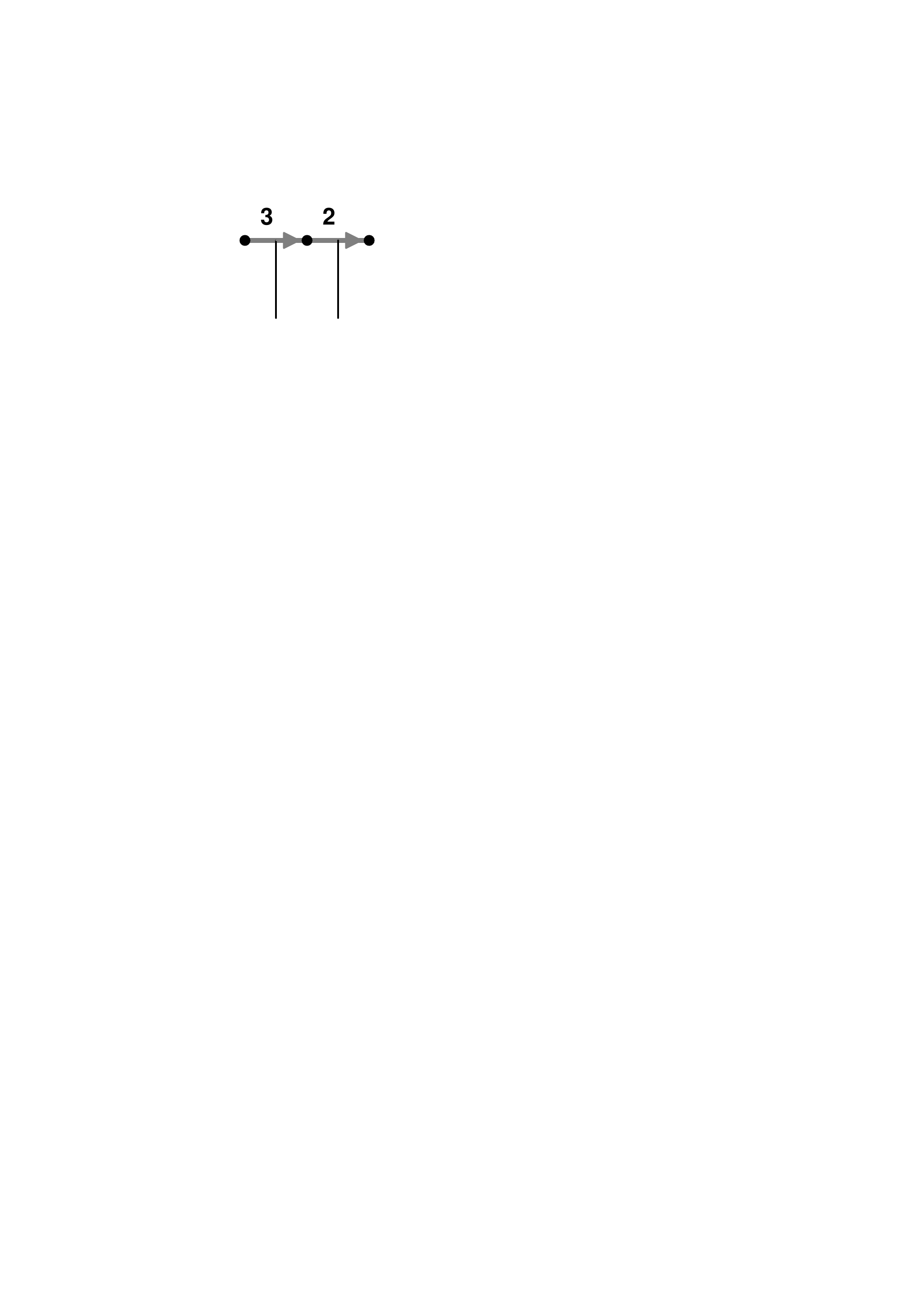}
\end{minipage}
\right)\\
& \qquad \quad = 
 \begin{minipage}{18mm}
\includegraphics*[width=18mm,bb=146 635 243 706]{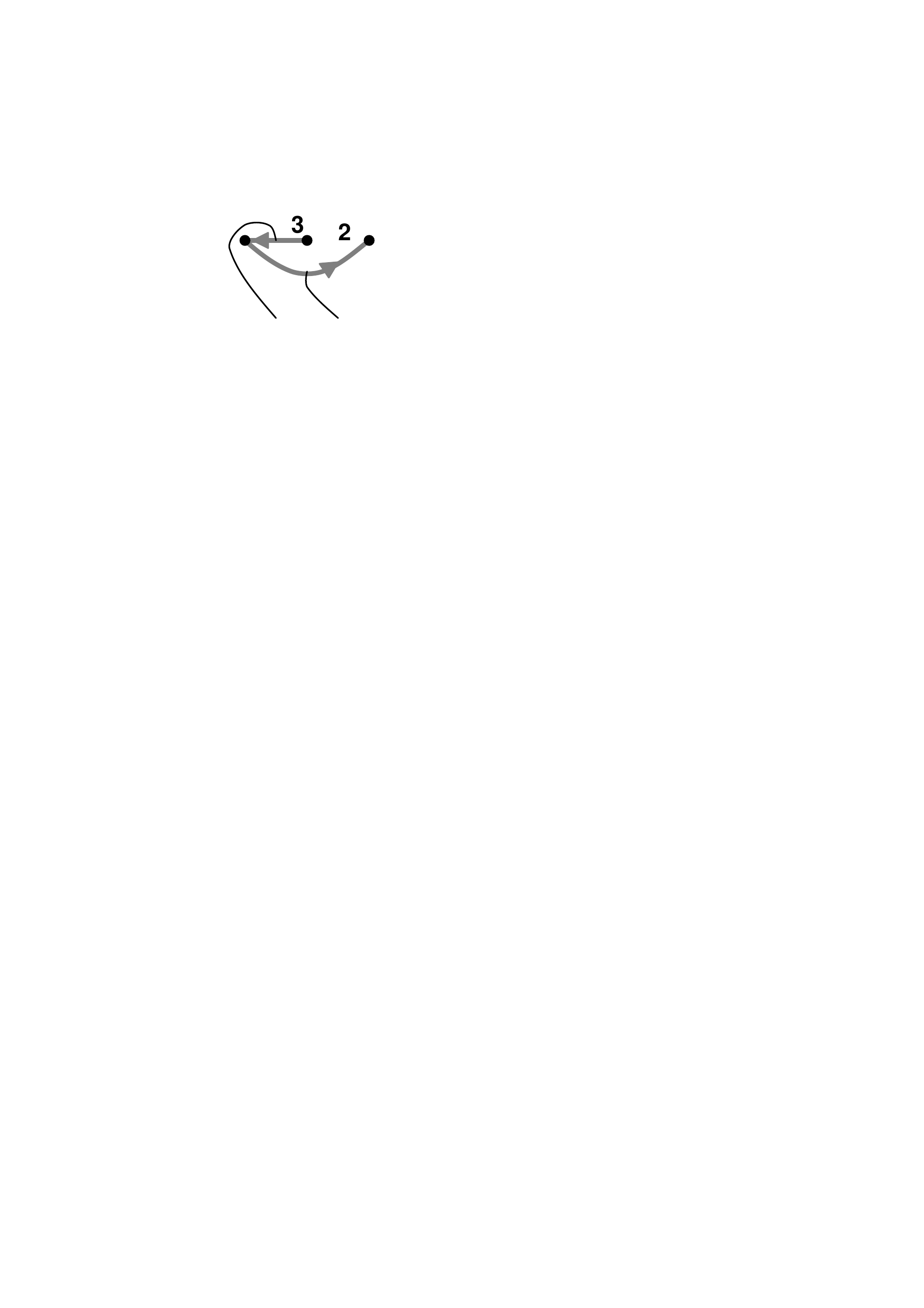}
\end{minipage} 
\overset{(F1, F2)}{ =}
(-1)^{3} (-d)^{3}x^{3} \begin{minipage}{18mm}
\includegraphics*[width=18mm,bb=152 635 243 709]{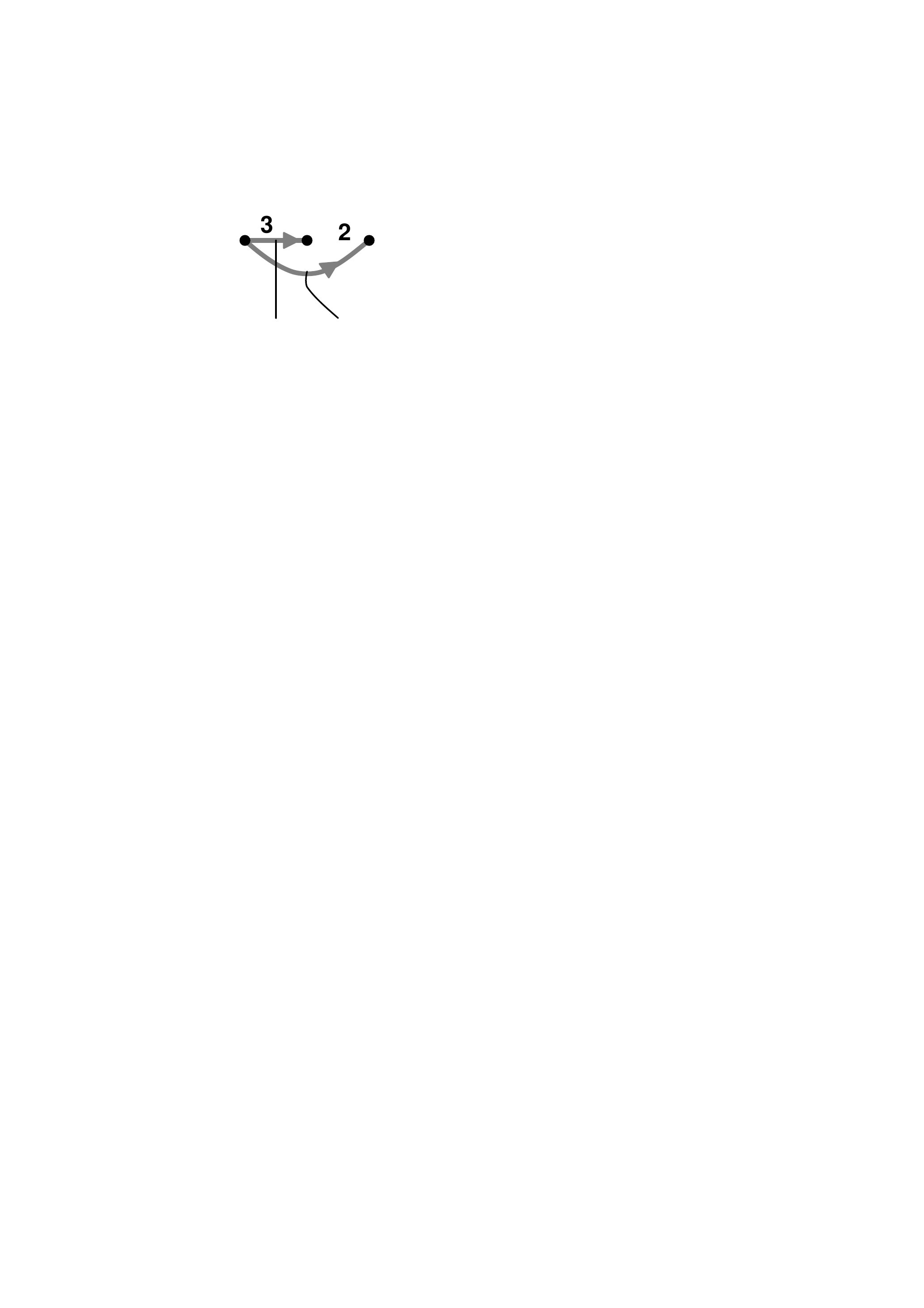}
\end{minipage} \\
& \qquad \quad \overset{(F4)}{=} d^{3}x^{3}\left( {2 \brace 0}_{-d}
\begin{minipage}{18mm}
\includegraphics*[width=18mm,bb=152 635 243 709]{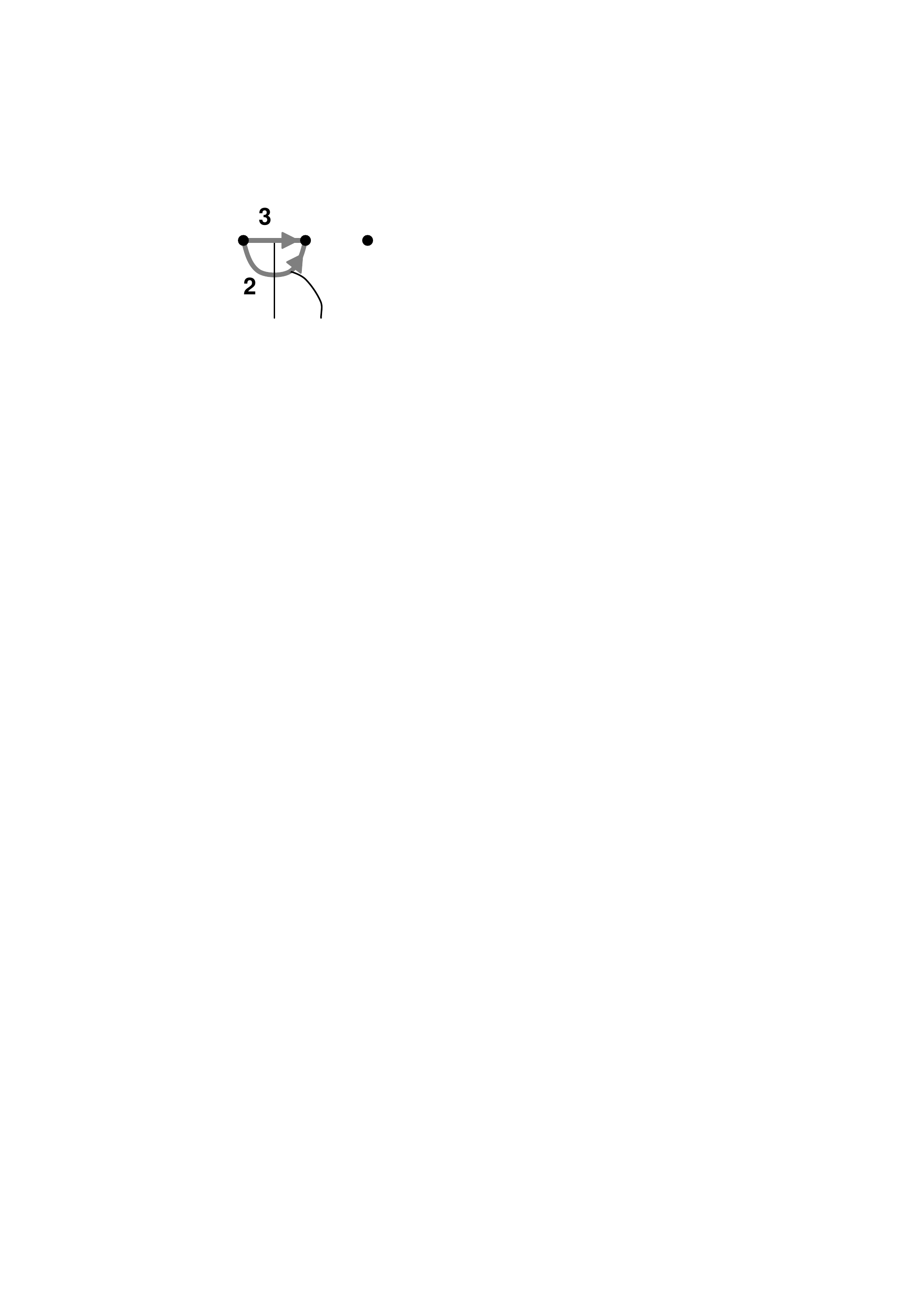}
\end{minipage} 
+ {2 \brace 1}_{-d}
\begin{minipage}{18mm}
\includegraphics*[width=18mm,bb=152 635 243 709]{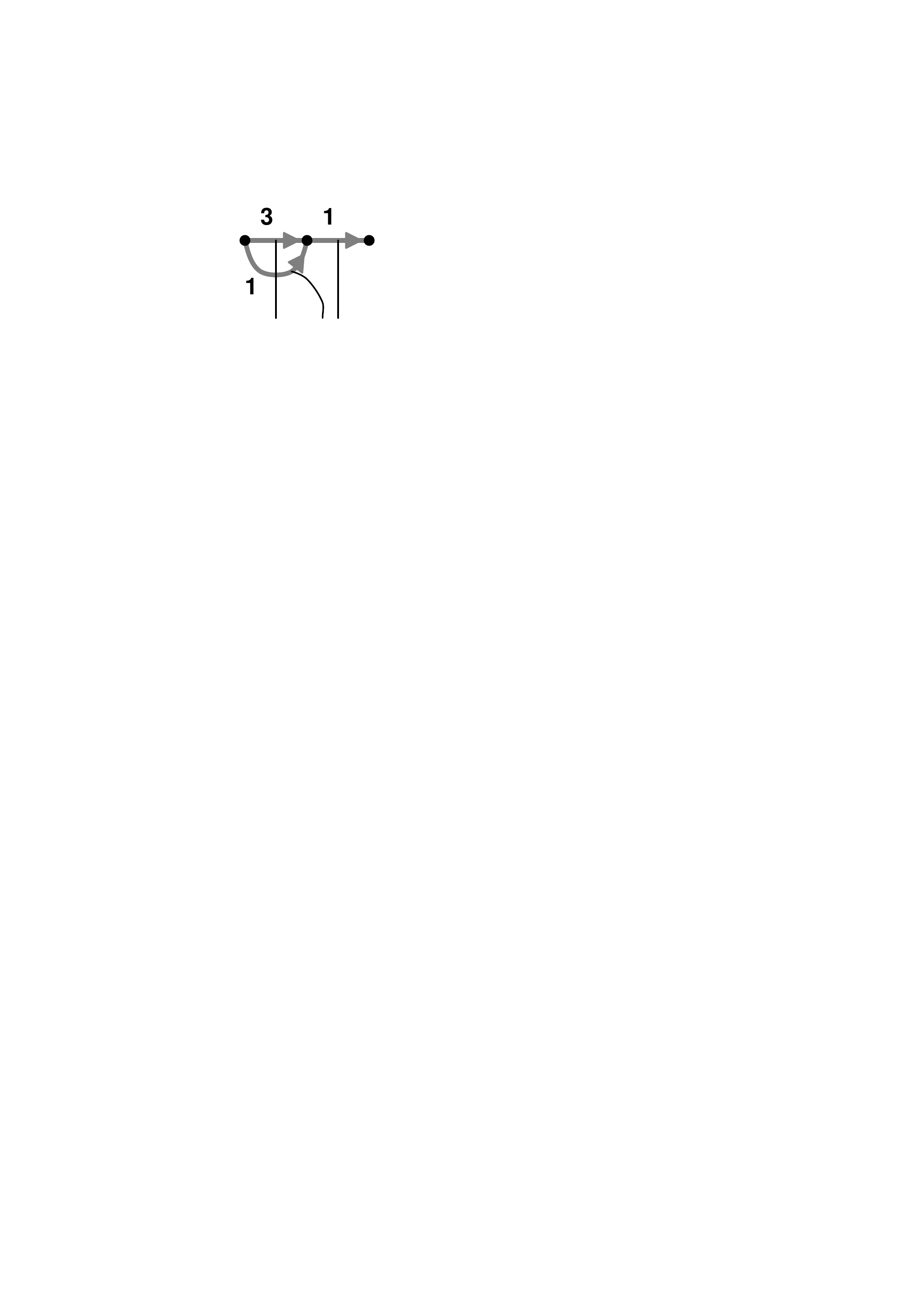}
\end{minipage} 
+ {2 \brace 2}_{-d}
\begin{minipage}{18mm}
\includegraphics*[width=18mm, bb=152 635 243 709]{zuF1.pdf}
\end{minipage} 
\right)\\
& \qquad \quad = d^{3}x^{3}\begin{minipage}{18mm}
\includegraphics*[width=18mm,bb=152 635 243 709]{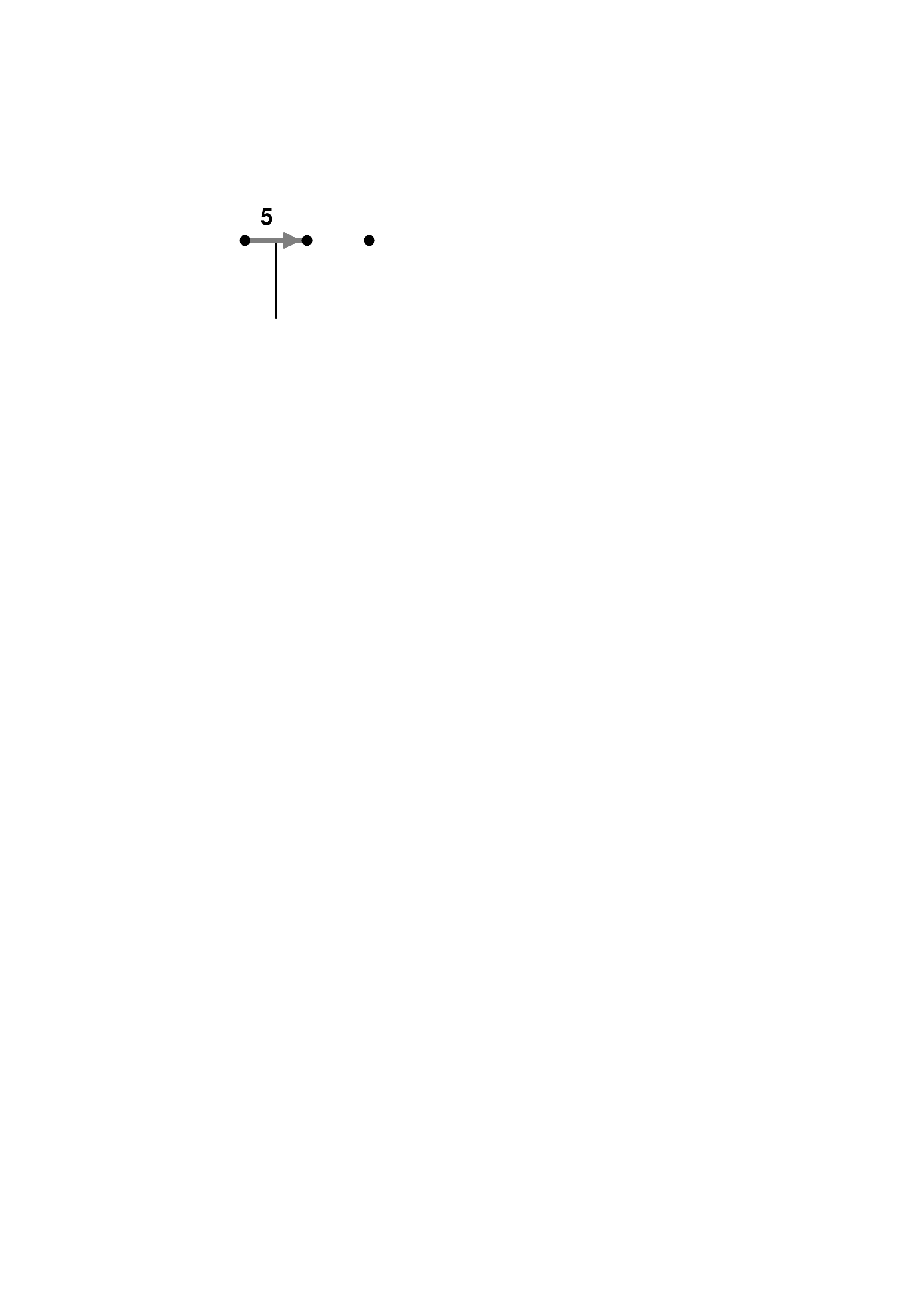}
\end{minipage} 
+ d^{3}(1-d)x^{3}\begin{minipage}{18mm}
\includegraphics*[width=18mm,bb=152 635 243 709]{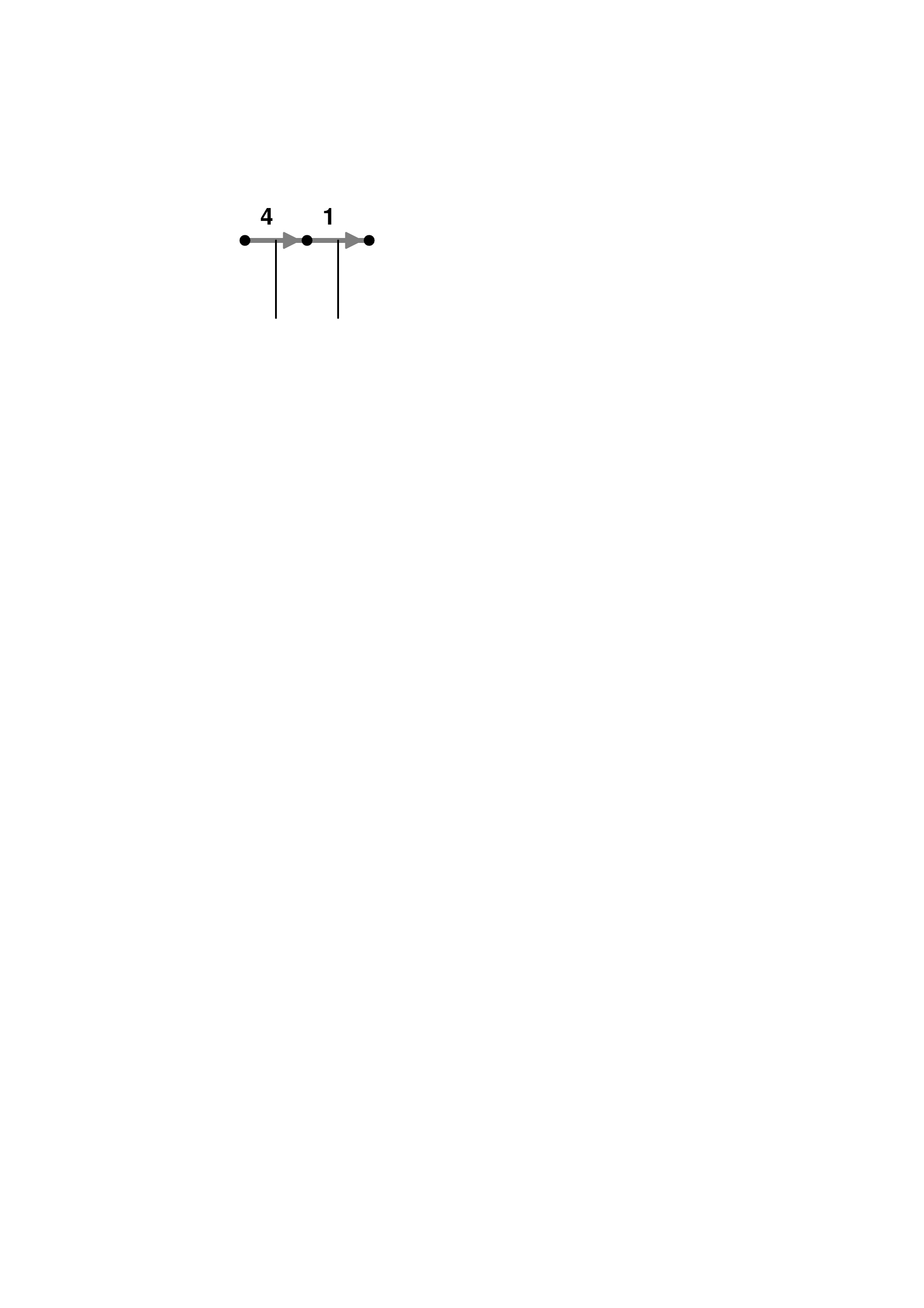}
\end{minipage} 
+ d^{3}x^{3}\begin{minipage}{18mm}
\includegraphics*[width=18mm, bb=152 635 243 709]{zuF1.pdf}.
\end{minipage}
\end{align*}
\end{example}

\subsection{Truncated Lawrence's representation}

We introduce a new braid group representation, defined in the case that the variable $-d$ is put as the $N$-th root of unity $\zeta_{N}^{2} = \exp(\frac{2 \pi \sqrt{-1}}{N})$. 

Let $\mathcal{H}^{\geq N}_{n,m}$ be the subspace of $\mathcal{H}_{n,m}$ spanned by $\{\bF_{\be}\: | \: \be \in\mathcal{E}_{n,m}^{\geq N}\}$, where  
\[ \mathcal{E}_{n,m}^{\geq N} = \{\be = (e_1,\ldots,e_{n-1}) \in \mathcal{E}_{n,m} \: | \: e_i \geq N \text{ for some } i\} \subset  \mathcal{E}_{n,m}.\]

We define $\overline{\mathcal{H}^{N}_{n,m}} = \mathcal{H}_{n,m} \slash \mathcal{H}^{\geq N}_{n,m}$. 
By abuse of notation, we use the same symbol $\bF$ to represent the image $\pi(\bF) \in \overline{\mathcal{H}^{N}_{n,m}}$ of the projection $\pi: \mathcal{H}_{n,m} \rightarrow \overline{\mathcal{H}^{N}_{n,m}}$. By definition, the set of standard multiforks without more than $(N-1)$ parallel tines $\{\bF_{\be}\}_{\be \in \overline{\mathcal{E}^{N}_{n,m}}}$, where
\[ \overline{\mathcal{E}^{N}_{n,m}} = \{\be = (e_1,\ldots,e_{n-1}) \in \mathcal{E}^{N}_{n,m}\: | \: e_{i} \leq N-1 \text{ for all } i\}  \subset  \mathcal{E}_{n,m}, \]
forms a basis of $\overline{\mathcal{H}^{N}_{n,m}}$. 
We denote the dimension of $\overline{\mathcal{H}^{N}_{n,m}}$, the cardinality of $\overline{\mathcal{E}^{N}_{n,m}}$ by $d^{N}_{n,m}$.

\begin{prop-definition}
At $d=-\zeta_{N}^{2}=-\exp(\frac{2 \pi \sqrt{-1}}{N})$, $\mathcal{H}^{\geq N}_{n,m}$ is a $B_{n}$-invariant subspace of $\mathcal{H}_{n,m}$, hence we have a linear representation
\[ l^{N}_{n,m}: B_{n} \rightarrow  \GL(\overline{\mathcal{H}^{N}_{n,m}}) = \GL(d^{N}_{n,m};\Z[x^{\pm 1}]). \]
We call $l^{N}_{n,m}$ \emph{truncated Lawrence's representation}.

\end{prop-definition}
\begin{proof}
For $\beta \in B_{n}$ and a standard multifork $\bF_{\be}$ with $\be \in \mathcal{E}_{n,m}^{\geq N}$, $\beta(\bF_{\be})$ is a multifork more than $(N-1)$ parallel tines. A crucial point is that for $k \geq N$, ${k \brace i}_{\zeta^{2}} = 0$ unless $i=0,k$. This observation and an iterated use of the fork rule (F4) show $\beta(\bF_{\be}) \in \mathcal{H}^{\geq N}_{n,m}$, as desired.
\end{proof}

\section{Truncated Lawrence's representation and quantum representation}
\label{sec:key}

In this section we explore the connection among the braid group representations we introduced in Section \ref{section:q} and \ref{sec:tLaw}.
We continue to assume that $\lambda$ is generic with respect to $q$.

Put $V'_{n,m} =\C v^{\lambda}_{0}\otimes V_{n-1,m} \subset V_{n,m}$. There is an isomorphism as $\C$-vector spaces $\Phi: V'_{n,m} \rightarrow W_{n,m}$ (for the precise definition of $\Phi$, which is not important here, see \cite{i2}).
Thus, by sending a natural basis of $V'_{n,m}$ we get a basis $\{w_{\be} = \Phi(v_{0}^{\lambda}\otimes(v_{e_1}^{\lambda}\otimes \cdots \otimes v_{e_{n-1}}^{\lambda}) \}_{\be=(e_1,\ldots,e_{n-1}) \in \mathcal{E}_{n,m}}$ of $W_{n,m}$, indexed by the same set $\mathcal{E}_{n,m}$ as the standard multiforks. 
Using this basis, we express the braid group representation $W_{n,m}$ as 
\[ \varphi^{W}_{n,m}:B_{n} \rightarrow \GL \left(\binom{n+m-2}{m};\C \right).\]
The next result says that $\varphi^{W}_{n,m}$ and $L_{n,m}$ are completely the same.

\begin{theorem}\cite[Theorem 6.1]{koh},\cite[Theorem 4.5]{i1}
\label{theorem:qish}
If $\lambda$ is generic with respect to $q$, then for an $n$-braid $\beta \in B_{n}$ we have an identity of matrices 
\begin{equation}
\label{eqn:qish}
\varphi^{W}_{n,m}(\beta) = L_{n,m}(\beta)|_{x=q^{-2\lambda},d=-q^{2}}. 
\end{equation}
\end{theorem}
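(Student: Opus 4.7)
The plan is to verify the matrix identity on the Artin generators $\sigma_1,\ldots,\sigma_{n-1}$ of $B_n$, comparing entries in the bases $\{w_{\be}\}_{\be\in\mathcal{E}_{n,m}}$ of $W_{n,m}$ and $\{\bF_{\be}\}_{\be\in\mathcal{E}_{n,m}}$ of $\mathcal{H}_{n,m}$. Both matrices have entries in $\Z[q^{\pm 1},q^{\pm\lambda}]$ after the substitution $x=q^{-2\lambda}$, $d=-q^{2}$, so the identity can be checked coefficient by coefficient, and by the defining relations of $B_n$ it suffices to treat one generator at a time.

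The first reduction is locality. On the quantum side, $\varphi_{V_{\lambda}}(\sigma_{i})$ acts as the identity outside the $i$-th and $(i+1)$-th tensor factors. On the homological side, the right-handed half twist $\sigma_{i}$ rearranges only those tines of $\bF_{\be}$ that lie in a neighborhood of $[i,i+1]\subset D_n$; the remaining tines stay parallel and standard. Hence in both descriptions the action of $\sigma_i$ on a basis vector indexed by $\be$ depends only on the adjacent pair $(e_i,e_{i+1})$, and the problem reduces to computing the effect on that pair with the other coordinates of $\be$ frozen.

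Next I would carry out the two resulting calculations in parallel. On the homological side, repeated application of the fork rules $(F1)$--$(F4)$ expresses $\sigma_{i}\bF_{\be}$ as a sum of standard multiforks whose coefficients are products of monomials in $x,d$ (from $(F1), (F2)$) with $q$-binomials ${a \brace b}_{-d}$ (from $(F4)$). On the quantum side, (\ref{eqn:raction}) gives the action of $\mathsf{R}$ on $\widehat{v}^{\lambda}_{i}\otimes\widehat{v}^{\lambda}_{j}$ explicitly; rescaling $v^{\lambda}_{j}=[\lambda;j]_{q}\widehat{v}^{\lambda}_{j}$ and projecting to the highest-weight subspace via the isomorphism $\Phi\co V'_{n,m}\to W_{n,m}$ of \cite{i2} yields matrix coefficients in $\Z[q^{\pm 1},q^{\pm\lambda}]$. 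To identify the two expressions I would use the elementary relation
\[ \{a\}_{q^{2}} = \frac{q^{2a}-1}{q^{2}-1} = q^{a-1}[a]_{q}, \]
and its analogue for $q$-binomials, under $d=-q^{2}$, together with the observation that the framing prefactor $q^{-\half\lambda^{2}}$ built into $\mathsf{R}$ supplies exactly the monomials in $x,d$ produced by the handle-crossing rules $(F1), (F2)$ under $x=q^{-2\lambda}$.

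The main obstacle is handling the basis change $\Phi\co V'_{n,m}\to W_{n,m}$ explicitly. The naive weight basis $v^{\lambda}_{e_1}\otimes\cdots\otimes v^{\lambda}_{e_n}$ of $V_{n,m}$ does \emph{not} correspond to the multifork basis; only after projecting onto highest-weight vectors and tracking the normalizations $[\lambda;j]_{q}$ do the two braid representations coincide as matrices on the nose. Carrying out this bookkeeping, and showing that the resulting coefficients agree termwise with the fork-rule coefficients above, is the technical heart of the argument and is what makes Kohno's original identification nontrivial already in the reduced Burau case $m=1$.
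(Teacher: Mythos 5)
The paper does not actually prove this statement: Theorem \ref{theorem:qish} is imported verbatim from \cite{koh} and \cite{i1}, so there is no internal proof to compare against, and your attempt has to be measured against the proofs in those references. As written, your plan has a genuine gap. The step you defer to the very end --- making the isomorphism $\Phi\co V'_{n,m}\to W_{n,m}$ explicit and matching the resulting matrix coefficients termwise against the fork-rule coefficients --- is not residual bookkeeping; it is the entire content of the theorem, and without it nothing has been verified. Moreover, the one structural claim you substitute for it, that the matrix of $\sigma_i$ in the basis $\{w_{\be}\}$ depends only on the adjacent pair $(e_i,e_{i+1})$, is itself unproved and not obvious: $\{w_{\be}\}$ is not the weight basis of $V_{\lambda}^{\otimes n}$ but its image under a global change of basis that pins the first tensor factor to $v_0^{\lambda}$, the index $\be$ is an $(n-1)$-tuple while $\sigma_i$ acts on two of the $n$ tensor factors, and $\sigma_1$ in particular disturbs the pinned factor. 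So even the reduction to a two-column computation needs an argument, and the two-column computation itself is missing.

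For contrast, the cited proofs do not proceed generator by generator. Kohno identifies the monodromy of the KZ connection on the space of null vectors with the quantum $R$-matrix representation (Drinfeld--Kohno) and then uses the hypergeometric integral pairing between the local system homology of $\widetilde{C_{n,m}}$ and the multivalued forms solving the KZ equation to match the homological action with that monodromy; the genericity hypothesis on $\lambda$ is precisely what makes this pairing nondegenerate, and the identification of bases and variables ($x=q^{-2\lambda}$, $d=-q^{2}$, and which standard multifork corresponds to which $w_{\be}$) is the part supplied in \cite{i1}. A direct generator-by-generator verification along your lines is plausible and would be more elementary, but to count as a proof it would need (i) an explicit description of $\Phi$, or equivalently of the highest-weight vectors $w_{\be}$ in the weight basis, (ii) a proof of locality of the $\sigma_i$-action in that basis, and (iii) the actual coefficient comparison between (\ref{eqn:raction}) and the fork rules under $d=-q^{2}$. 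None of these is carried out.
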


We extend the identity (\ref{eqn:qish}) for the quantum representation $Y^{N}_{n,m}$ and truncated Lawrence's representation $l^{N}_{n,m}$. 
For $\be \in \overline{\mathcal{E}^{N}_{n,m}}$, we define $y_{\be} = \Phi(v^{\lambda}_{0}\otimes(v_{e_1}^{\lambda}\otimes \cdots \otimes v_{e_{n-1}}^{\lambda}))$. Then $y_{\be} \in Y^{N}_{n,m}$ and $\{y_{\be}\}_{ \overline{\mathcal{E}^{N}_{n,m}}}$ forms a basis of $Y^{N}_{n,m}$.
We express the braid group representation $Y_{n,m}$ by using this basis as
\[\varphi^{Y}_{n,m}:B_{n} \rightarrow \GL \left( d^{N}_{n,m};\C \right). \]

\begin{theorem}
\label{theorem:key} 
If $\lambda$ is generic with respect to $q$, for an $n$-braid $\beta \in B_{n}$ we have an identity of matrices 
\[ \varphi^{Y}_{n,m}(\beta) = l^{N}_{n,m}(\beta)|_{x=\zeta^{-2\lambda}, d=-\zeta^{2}} \]
\end{theorem}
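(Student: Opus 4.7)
The plan is to deduce Theorem \ref{theorem:key} from Theorem \ref{theorem:qish} by specializing $q=\zeta$ and identifying $Y^{N}_{n,m}$ with a natural $B_{n}$-invariant quotient of $W_{n,m}$. Since $\lambda$ is generic with respect to $\zeta$, all matrix entries appearing in (\ref{eqn:qish}) remain finite at $q=\zeta$, so Theorem \ref{theorem:qish} specializes to
\[
\varphi^{W}_{n,m}(\beta) \;=\; L_{n,m}(\beta)\big|_{x=\zeta^{-2\lambda},\ d=-\zeta^{2}}
\]
as matrices indexed by $\mathcal{E}_{n,m}$, written in the bases $\{w_{\be}\}$ and $\{\bF_{\be}\}$. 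I order the index set so that $\overline{\mathcal{E}^{N}_{n,m}}$ comes before $\mathcal{E}^{\geq N}_{n,m}$ and view both matrices in the corresponding block form.

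From the Proposition-Definition, $\mathcal{H}^{\geq N}_{n,m}$ is $B_{n}$-invariant in $\mathcal{H}_{n,m}$, so $L_{n,m}(\beta)|_{d=-\zeta^{2}}$ is block upper-triangular in the chosen ordering. Transporting this via the displayed equality, $W^{\geq N}_{n,m}:=\vspan\{w_{\be}\}_{\be\in\mathcal{E}^{\geq N}_{n,m}}$ is a $B_{n}$-invariant subspace of $W_{n,m}$ and the quotient representation is $B_{n}$-equivariantly isomorphic to $\overline{\mathcal{H}^{N}_{n,m}}$ via $[w_{\be}]\leftrightarrow \bF_{\be}$. Next I check that $U_{N}(\lambda)^{\otimes n}$ is $B_{n}$-invariant in $V_{\lambda}^{\otimes n}$ at $q=\zeta$: in the R-matrix expansion (\ref{eqn:raction}) with $i,j\leq N-1$, any term with $j+n\geq N$ carries the factor ${n+j\brack j}_{\zeta}$, which vanishes because $[N]_{\zeta}=0$ divides the numerator $[n+j]_{\zeta}!$ while the denominator $[n]_{\zeta}![j]_{\zeta}!$ remains nonzero for $n,j\leq N-1$. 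Consequently $Y^{N}_{n,m}$ is $B_{n}$-invariant in $W_{n,m}$, and the composite
\[
Y^{N}_{n,m} \;\hookrightarrow\; W_{n,m} \;\twoheadrightarrow\; W_{n,m}/W^{\geq N}_{n,m}
\]
is a $B_{n}$-equivariant linear map. Because $y_{\be}$ and $w_{\be}$ are given by the identical formula $\Phi(v^{\lambda}_{0}\otimes v^{\lambda}_{e_{1}}\otimes\cdots\otimes v^{\lambda}_{e_{n-1}})$, this composite sends $y_{\be}\mapsto[w_{\be}]$ and so induces a bijection of bases; hence it is an isomorphism of $B_{n}$-representations. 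Comparing matrices in these bases and chaining the two identifications yields $\varphi^{Y}_{n,m}(\beta)=l^{N}_{n,m}(\beta)|_{x=\zeta^{-2\lambda},\ d=-\zeta^{2}}$.

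The main point requiring care is the identification $y_{\be}=w_{\be}$ in $V_{\lambda}^{\otimes n}$ for $\be\in\overline{\mathcal{E}^{N}_{n,m}}$, i.e.\ the paper's assertion that $\Phi$ applied to $v^{\lambda}_{0}\otimes v^{\lambda}_{e_{1}}\otimes\cdots\otimes v^{\lambda}_{e_{n-1}}$ actually lands in $Y^{N}_{n,m}\subset U_{N}(\lambda)^{\otimes n}$ at $q=\zeta$. Following the construction of $\Phi$ in \cite{i2}, one obtains $y_{\be}$ from its leading term $v^{\lambda}_{0}\otimes v^{\lambda}_{e_{1}}\otimes\cdots\otimes v^{\lambda}_{e_{n-1}}$ by adding correction terms built out of applications of $F$, and at $q=\zeta$ the operator $F$ preserves $U_{N}(\lambda)^{\otimes n}$ since $Fv^{\lambda}_{N-1}=[N]_{\zeta}v^{\lambda}_{N}=0$. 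Once this identification is in place, the remainder of the argument is a bookkeeping of block matrices, with the core content reduced to the two vanishing identities ${n+j\brack j}_{\zeta}=0$ and ${k\brace i}_{\zeta^{2}}=0$ that already drive the invariance of $U_{N}(\lambda)^{\otimes n}$ and of $\mathcal{H}^{\geq N}_{n,m}$, respectively.
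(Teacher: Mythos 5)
Your proof is correct and follows essentially the same route as the paper: both deduce the identity from Theorem \ref{theorem:qish} by specializing at $q=\zeta$ and matching the basis $\{y_{\be}\}_{\be\in\overline{\mathcal{E}^{N}_{n,m}}}$ of $Y^{N}_{n,m}$ with the images of $\{w_{\be}\}$ in the truncated quotient, which correspond to the standard multiforks spanning $\overline{\mathcal{H}^{N}_{n,m}}$. The only difference is one of packaging: the paper realizes $Y^{N}_{n,m}$ as $p^{\otimes n}(W_{n,m}|_{q=\zeta})$ with $p\circ\iota=\id$, whereas you run the composite $Y^{N}_{n,m}\hookrightarrow W_{n,m}\twoheadrightarrow W_{n,m}/W^{\geq N}_{n,m}$ and verify the needed invariances (of $U_{N}(\lambda)^{\otimes n}$ inside $V_{\lambda}^{\otimes n}$ via the vanishing of ${n+j \brack j}_{\zeta}$, and of $W^{\geq N}_{n,m}$ transported from $\mathcal{H}^{\geq N}_{n,m}$) explicitly, which is if anything a more careful rendering of the same argument.
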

\begin{proof}
We view $U_{N}(\lambda)$ as a quotient
$ V_{\lambda}|_{q=\zeta} \slash \{ v^{\lambda}_{i} = 0 \:| \:\text{for } i \geq N\},$ rather as a submodule of $V_{\lambda}|_{q=\zeta}$, and denote the quotient map by $p: V_{\lambda}|_{q=\zeta} \rightarrow U_{N}(\lambda)$. Then $Y^{N}_{n,m}$ is also seen as a quotient, $Y^{N}_{n,m} = p^{\otimes n}(W_{n,m}|_{q=\zeta})$ where $p^{\otimes n}: (V_{\lambda}|_{q=\zeta})^{\otimes n} \rightarrow U_{N}(\lambda)^{\otimes n}$ is the quotient map from $p$. Since $p$ is  an $U_{\zeta}(\sltwo)$-module homomorphism, $p^{\otimes n}$ is a surjection as a braid group representation.

By definition for the inclusion map $\iota : Y^{N}_{n,m} \hookrightarrow W_{n,m}|_{q=\zeta}$,  $p\circ\iota =\id$. Thus, $y_{\be} = p \circ \iota(y_{\be}) = p(w_{\be})$ for $\be \in \overline{\mathcal{E}^{N}_{n,m}}$. This observation, together with Theorem \ref{theorem:qish}, shows that the basis $\{y_{\be}\}_{\be \in \overline{\mathcal{E}^{N}_{n,m}}}$ of $Y^{N}_{n,m}$ corresponds to the standard multifork basis $\{\bF_{\be}\}_{\be \in \overline{\mathcal{E}^{N}_{n,m}}}$ of $\overline{\mathcal{H}^{N}_{n,m}}$, with substitution $x=\zeta^{-2\lambda}, d= -\zeta^{2}$.
\end{proof}

\section{Homological representation formula of the colored Alexander invariant}
\label{sec:formula}

Theorem \ref{theorem:key} philosophically provides a formula of the colored Alexander invariant, but rewriting the definition of colored Alexander invariant (\ref{eqn:ca}) in terms of $l^{N}_{n,m}(\beta) \cong \varphi^{Y}_{n,m}(\beta)$ requires several non-trivial computations. Our main task is to express the partial trace as a linear combination of the traces on intertwiner space.

To begin with, we compute the partial trace for $n=2$. 
Recall that $U_{N}(\lambda) \otimes U_{N}(\mu) = \bigoplus_{i=0}^{N-1} U_{N}(\lambda +\mu-2i) \otimes T_{i}(\lambda,\mu)$, where $T_i=T_{i}(\lambda,\mu)$ is the intertwiner space which is one-dimensional. Thus $f=f(\lambda,\mu) \in \End_{U_{\zeta}(\sltwo)}(U_{N}(\lambda) \otimes U_{N}(\mu))$ acts on each $T_{i}$ as a scalar multiple by $f_{i}  = f_{i}(\lambda,\mu)$.

\begin{lemma}
\label{lemma:coeff2}
The partial trace of $f=f(\lambda,\mu) \in \End_{U_{\zeta}(\sltwo)}(U_{N}(\lambda) \otimes U_{N}(\mu))$ is given by 
\begin{equation}
\label{eqn:formula0}
\trace_{2}(\id \otimes h)f = \left( \sum_{i=0}^{N-1} C_{i}^{N}(\lambda,\mu) f_{i} \right) \id. 
\end{equation}
Here $C^{N}_{i}(\lambda,\mu)$ satisfies the symmetry
\begin{equation}
\label{eqn:symmetry}
C_{i}^{N}(\lambda,\mu-2j) = C_{i+j}^{N}(\lambda,\mu),
\end{equation}
and is given by the formula
\begin{equation}
\label{eqn:coeff}
C^{N}_{i}(\lambda,\mu)= \frac{[\lambda;N-1]_{\zeta}}{[\lambda+\mu-2i;N-1]_{\zeta}}.
\end{equation}
\end{lemma}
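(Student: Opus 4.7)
My plan is to combine Schur's lemma with a direct computation on a highest weight test vector. Since $\lambda$ is generic with respect to $\zeta$, the module $U_{N}(\lambda)$ is irreducible, and the twist $h=K^{1-N}$ is precisely the pivotal element that makes $\trace_{2}((\id\otimes h)\,\cdot\,)$ a $U_{\zeta}(\sltwo)$-equivariant map $\End_{U_{\zeta}(\sltwo)}(U_{N}(\lambda)\otimes U_{N}(\mu))\to\End_{U_{\zeta}(\sltwo)}(U_{N}(\lambda))$. Schur's lemma then forces the output to be a scalar multiple of $\id$; since each intertwiner space $T_{i}$ is one-dimensional, any $f$ is determined by the tuple $(f_{i})$, and linearity yields the expansion $\sum_{i}C_{i}^{N}(\lambda,\mu)f_{i}$, where $C_{i}^{N}(\lambda,\mu)$ is the scalar attached to the projector $f=P_{i}$ onto the $i$-th summand $U_{N}(\lambda+\mu-2i)\otimes T_{i}$.

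To evaluate $C_{i}^{N}(\lambda,\mu)$, I would apply the partial trace to the highest weight vector $v_{0}^{\lambda}$ and read off the coefficient of $v_{0}^{\lambda}$:
\[ C_{i}^{N}(\lambda,\mu)=\sum_{b=i}^{N-1}\zeta^{-(N-1)\mu+2(N-1)b}\bigl\langle v_{0}^{\lambda*}\otimes v_{b}^{\mu*},\,P_{i}(v_{0}^{\lambda}\otimes v_{b}^{\mu})\bigr\rangle. \]
Each $v_{0}^{\lambda}\otimes v_{b}^{\mu}$ has weight $\lambda+\mu-2b$, so $P_{i}(v_{0}^{\lambda}\otimes v_{b}^{\mu})$ is a scalar multiple of $F^{b-i}w_{i}$, where $w_{i}=\sum_{k}c_{k}^{(i)}v_{k}^{\lambda}\otimes v_{i-k}^{\mu}$ is the highest weight vector of the $i$-th summand, with quantum Clebsch--Gordan coefficients $c_{k}^{(i)}$ determined recursively from $Ew_{i}=0$. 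Expanding $F^{b-i}w_{i}$ via the iterated coproduct $\Delta(F)=F\otimes 1+K^{-1}\otimes F$ and collecting the $(0,b)$-coefficient reduces $C_{i}^{N}(\lambda,\mu)$ to a finite $q$-hypergeometric sum at $q=\zeta$, which must then be shown to equal $[\lambda;N-1]_{\zeta}/[\lambda+\mu-2i;N-1]_{\zeta}$. The symmetry (\ref{eqn:symmetry}) follows immediately from the invariance of this ratio under $(\mu,i)\mapsto(\mu-2j,i+j)$.

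A cleaner route is to first establish (\ref{eqn:symmetry}) by a structural argument---for instance, by observing that the partial trace scalar depends on the summand only through its isomorphism type, hence only through the highest weight $\lambda+\mu-2i$---and then compute only the single scalar $C_{0}^{N}(\lambda,\mu)$. For $i=0$ the highest weight vector is simply $v_{0}^{\lambda}\otimes v_{0}^{\mu}$, and $F^{b}(v_{0}^{\lambda}\otimes v_{0}^{\mu})$ expands into a single, manageable $q$-binomial sum with no Clebsch--Gordan coefficients to untangle. The main obstacle I anticipate is the closed-form evaluation of this $q$-hypergeometric sum at $q=\zeta$: the identity $[N]_{\zeta}=0$ kills many intermediate terms, and the final simplification should come from a $q$-Vandermonde or Pfaff--Saalsch\"utz-type identity. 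The secondary difficulty is making the structural argument for (\ref{eqn:symmetry}) rigorous without appealing to the final formula, since the decompositions of $U_{N}(\lambda)\otimes U_{N}(\mu)$ and $U_{N}(\lambda)\otimes U_{N}(\mu-2j)$ live in different ambient spaces and the identification must respect the pivotal twist $h$.
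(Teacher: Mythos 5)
Your overall strategy---Schur's lemma plus one-dimensionality of the $T_i$ to get the linear expansion, a structural proof of the symmetry (\ref{eqn:symmetry}) first, and then a single computation of $C^{N}_{0}$ via the highest weight vector $v_{0}^{\lambda}\otimes v_{0}^{\mu}$ and a $q$-identity at the root of unity---is exactly the route the paper takes. However, there is one step in your plan that, as written, would produce the wrong answer. The quantity $\bigl\langle v_{0}^{\lambda*}\otimes v_{b}^{\mu*},\,P_{0}(v_{0}^{\lambda}\otimes v_{b}^{\mu})\bigr\rangle$ is a product of \emph{two} coefficients: the coefficient $\alpha_{b,0}$ of $F^{b}w_{0}$ in the expansion of $v_{0}^{\lambda}\otimes v_{b}^{\mu}$ over the basis $\{F^{b-j}w_{j}\}_{j}$ (an inverse Clebsch--Gordan coefficient), and the coefficient of $v_{0}^{\lambda}\otimes v_{b}^{\mu}$ in $F^{b}w_{0}$ (the forward one, which is your ``single $q$-binomial sum'' and equals $\zeta^{-b\lambda}[b]_{\zeta}!$). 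Your ``cleaner route'' accounts only for the second; without $\alpha_{b,0}=[\mu;b]_{\zeta}\big/\bigl([b]_{\zeta}!\,[\lambda+\mu;b]_{\zeta}\bigr)$ you lose all dependence on $\mu$ and, in particular, the denominators that become $[\lambda+\mu-2i;N-1]_{\zeta}$ in (\ref{eqn:coeff}). The paper extracts $\alpha_{b,0}$ without inverting any Clebsch--Gordan matrix: apply $E$ to $v_{0}^{\lambda}\otimes v_{b}^{\mu}=\sum_{j}\alpha_{b,j}F^{b-j}t_{j}$, use the commutator formula for $[E,F^{k}]$ together with $Et_{j}=0$, and obtain a two-term recursion in $b$. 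You should make this step explicit in either of your two routes.

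The remaining deferred steps are genuinely where the bulk of the work lies, but your guesses about the mechanisms are correct. The final sum is $\sum_{i=0}^{N-1}\frac{[\mu;i]_{\zeta}}{[\lambda+\mu;i]_{\zeta}}\zeta^{-i\lambda-(N-1)\mu-2i}$, and the paper evaluates it by proving, by induction on $N$, a $q$-Vandermonde-type identity whose middle terms carry ${N \brack i}_{q}$ and hence vanish at $q=\zeta$ for $0<i<N$, leaving exactly $[\lambda;N-1]_{\zeta}/[\lambda+\mu;N-1]_{\zeta}$; this identity must actually be proved, not merely cited. For the symmetry, you correctly identify the danger that the identification of $T_{i}(\lambda,\mu-2j)$ with $T_{i+j}(\lambda,\mu)$ must be compatible with the pivotal twist; the paper resolves this (for $j=1$) by checking that $h$ has the same eigenvalue $\zeta^{(N-1)(-\mu+2+2i)}$ on the matching weight vectors $v_{i}^{\mu-2}$ and $v_{i+1}^{\mu}$, and then comparing the two expansions of the partial trace term by term. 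With these two computations supplied, your outline matches the paper's proof.
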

\begin{proof}
Throughout the proof we adopt the convention that all indices are considered modulo $N$, unless otherwise specified.

First we prove the symmetry (\ref{eqn:symmetry}). It is sufficient to check (\ref{eqn:symmetry}) for the case $j=1$. A key observation is that 
\[ U_{N}(\lambda)\otimes U_{N}(\mu-2) = \bigoplus_{i=0}^{N-1} U_{N}(\lambda+\mu-2-2i) \cong \bigoplus_{i=0}^{N-1} U_{N}(\lambda+\mu-2i) = U_{N}(\lambda) \otimes U_{N}(\mu).
\]
This implies an isomorphism of intertwiner spaces
$ T_{i}(\lambda,\mu-2) \cong T_{i+1}(\lambda,\mu)$, hence $f_{i}(\lambda,\mu-2) = f_{i+1}(\lambda,\mu)$.
Also by definition of $h$,
\[ h(v_{i}^{\mu-2}) = \zeta^{(N-1)(-\mu+2+2i)} v^{\mu-2}_{i}, \quad h(v_{i+1}^{\mu}) = \zeta^{(N-1)(-\mu+2+2i)} v_{i+1}^{\mu}. \]
That is, the actions of $h$ on $T_{i}(\lambda,\mu-2)$ and $T_{i+1}(\lambda,\mu)$ are the same. Therefore we conclude
\begin{eqnarray*}
0 & = &\trace_{2}(\id \otimes h)f(\lambda,\mu-2)- \trace_{2}(\id \otimes h)f(\lambda,\mu)\\
 &= & \sum_{i=0}^{N-1} \left(C^{N}_{i}(\lambda,\mu-2) f_{i}(\lambda,\mu-2) - C^{N}_{i}(\lambda,\mu)f_{i}(\lambda,\mu)\right)\\
& = & \sum_{i=0}^{N-1} (C^{N}_{i+1}(\lambda,\mu-2) - C^{N}_{i}(\lambda,\mu))f_{i}(\lambda,\mu)
\end{eqnarray*}
This proves $C^{N}_{i+1}(\lambda,\mu-2) = C^{N}_{i}(\lambda,\mu)$.

Then we proceed to determine $C^{N}_{i}(\lambda,\mu)$. Thanks to (\ref{eqn:symmetry}), it is sufficient to determine $C^{N}_{0}(\lambda,\mu)$.

As a subspace, $T_{i}$ is identified with 
\[ T_{i} = \ker E \cap \textrm{span}\{v_{k}^{\lambda} \otimes v^{\mu}_{l}\:| \: k+l=i\}.\]
Here the condition on the indices $k+l=i$ is strict one, not regarded as modulo $N$. Take $t_{i} \in T_{i}$ so that $t_i= v_{0}^{\lambda} \otimes v_{i}^{\mu} + (\text{terms of }  v_{\geq 1}^{\lambda} \otimes v_{*}^{\mu})$. Then $\{F^{i-j} t_{j}\}_{j=0,\ldots,i}$ is a basis of the subspace  spanned by $\{v_{k}^{\lambda} \otimes v^{\mu}_{l}\:| \: k+l=i\}$ (again, the condition on the indices $k+l=i$ is strict one, not regarded as modulo $N$). Hence we can write
\[
v_{0}^{\lambda} \otimes v_{i}^{\mu} = \sum_{j=0}^{i} \alpha_{i,j}F^{i-j} t_{j}.
\]
for some $\alpha_{i,j}$. By applying $E$ to the both sides we get
\[ [\mu+1-i]_{\zeta} v_{0}^{\lambda} \otimes v_{i-1}^{\mu} = \sum_{j=0}^{N-1} \alpha_{i,j} EF^{i-j} t_{j}.\]
Since $[E,F^{i}] = [i]_{\zeta} F^{i-1}\frac{\zeta^{-(i-1)}K-\zeta^{(i-1)}K^{-1}}{\zeta-\zeta^{-1}}$ and $Et_{i}=0$ for $i\geq 1$,
\begin{eqnarray*}
[\mu+1-i]_{\zeta}v_{0}^{\lambda} \otimes v_{i-1}^{\mu} & = & \sum_{j=0}^{N-1} \alpha_{i,j} [i-j]_{\zeta} F^{(i-j)-1} \frac{\zeta^{-(i-j-1)}K-\zeta^{(i-j-1)}K^{-1}}{\zeta-\zeta^{-1}}t_{j} \\
& = & \sum_{j=0}^{N-1} \alpha_{i,j} [i-j]_{\zeta}[\lambda + \mu +1 - (i+j)]_{\zeta} F^{(i-1)-j}t_{j}.
\end{eqnarray*}
This shows
\[  \alpha_{i,j} = \frac{[\mu+1-i]_{\zeta}}{[i-j]_{\zeta}[\lambda + \mu +1 - (i+j)]_{\zeta}}\alpha_{i-1,j} . \]
By definition, $y_0 =  v_{0}^{\lambda} \otimes v_{0}^{\mu}$ so $\alpha_{0,0} = 1$. Therefore 
\begin{equation}
\label{eqn:alpha}
\alpha_{i,0} = \frac{[\mu;i]_{\zeta}}{[i]_{\zeta}![\lambda+\mu;i]_{\zeta}}.
\end{equation}

On the other hand, by using the basis $\{ v_{k}^{\lambda} \otimes v^{\mu}_{l} \}_{k+l=i}$, $F^{i-j}t_{j}$ is expressed as 
\[
F^{i-j}t_{j} = F^{i-j}(v_{0}^{\lambda} \otimes v_{j}^{\mu} + (\text{other terms}) ) = \zeta^{-(i-j)\lambda}[i;i-j]_{q}  v_{0}^{\lambda} \otimes  v_{i}^{\mu} + (\text{other terms}).
\]
Thus, the action of $(\id \otimes h)f$ is given by
\begin{eqnarray*}
(\id \otimes h)f(v_{0}^{\lambda}\otimes v_{i}^{\mu})\!& = & (\id \otimes h)f \left(\sum_{j=0}^{i} \alpha_{i,j}F^{i-j} t_{j} \right) = (\id \otimes h)\left(\sum_{j=0}^{i} \alpha_{i,j} f_{j}F^{i-j}t_{j}\right)\\
& = &
(\id \otimes h) \left\{\sum_{j=0}^{i} \alpha_{i,j} f_{j}\zeta^{-(i-j)\lambda}[i;i-j]_{\zeta} v_{0}^{\lambda} \otimes  v_{i}^{\mu}  + (\text{other terms}) \right\}\\
& = & \!\!\left(\sum_{j=0}^{i} \alpha_{i,j} f_{j}\zeta^{-(i-j)\lambda}\zeta^{-(N-1)(\mu-2i)}[i;i-j]_{\zeta}\right) v_{0}^{\lambda} \otimes  v_{i}^{\mu} + (\text{other terms}).
\end{eqnarray*}
Therefore
\begin{eqnarray*}
\trace_{2}( \id \otimes h)f & = & \left( \sum_{i=0}^{N-1}\sum_{j=0}^{i} \alpha_{i,j} f_{j}\zeta^{-(i-j)\lambda}\zeta^{-(N-1)\mu+2Ni-2i}[i;i-j]_{\zeta} \right) \id\\
& = & \left\{ \sum_{j=0}^{N-1}\left( \sum_{i=j}^{N-1} \alpha_{i,j} \zeta^{-(i-j)\lambda}\zeta^{-(N-1)\mu-2i}[i;i-j]_{\zeta}\right)  f_{j} \right\} \id.
\end{eqnarray*}
By (\ref{eqn:alpha}), we conclude
\[
C^{N}_{0}(\lambda,\mu) = \sum_{i=0}^{N-1} \alpha_{i,0} \zeta^{-i\lambda}\zeta^{-(N-1)\mu-2i}[i]_{\zeta}! = \sum_{i=0}^{N-1} \frac{[\mu;i]_{\zeta}}{[\lambda+\mu;i]_{\zeta}} \zeta^{-i\lambda}\zeta^{-(N-1)\mu-2i}. 
\]

The above formula of $C^{N}_{0}(\lambda,\mu)$ is simplified as follows.
Let us put $A_i=q^{-\lambda}[\mu-i]_{q}$, $B_{i}=q^{\mu}[\lambda-i]_{q}$, and $C_{i}=[\lambda+\mu-i]_{q}$. Then $C_{i} = q^{j}A_{i-j}+q^{-(i-j)}B_{j}$.

By induction on $N$, we show the identity 
\[
\sum_{i=0}^{N-1} \frac{A_0 A_1\cdots A_{i-1}}{C_0 C_1\cdots C_{i-1}} q^{-2i}= \frac{1}{C_0\cdots C_{N-2}}\sum_{i=1}^{N} q^{-(N-i)} {N \brack i}_{q} A_{0}\cdots A_{N-1-i} B_{0} \cdots B_{i-2}.
\]
Here we use a convention that for $X \in \{A,B,C\}$, $X_{0}X_{1}\cdots X_{j}=1$  if $j<0$). By using the inductive hypothesis for $N$, one computes
\begin{align*}
&\sum_{i=0}^{N} \frac{A_0 A_1\cdots A_{i-1}}{C_0 C_1\cdots C_{i-1}}q^{-2i} \\
& \quad = \frac{1}{C_0\cdots C_{N-1}} \left(\sum_{i=1}^{N} q^{-(N-i)} {N \brack i}_{q}\! A_{0}\cdots A_{N-1-i} B_{0} \cdots B_{i-2} C_{N-1} + q^{-2N}A_{0}\cdots A_{N-1}\right)\\
& \quad = \frac{1}{C_0\cdots C_{N-1}} \left(\sum_{i=1}^{N} q^{-(N-i)} {N \brack i}_{q} \!A_{0}\cdots A_{N-1-i} B_{0} \cdots B_{i-2} (q^{i-1} A_{N-i}+ q^{-N+i}B_{i-1}) \right. \\
& \qquad \qquad \qquad \qquad \qquad\qquad\qquad\qquad\qquad\qquad+ q^{-2N}{N \brack 0}_{q}A_{0}\cdots A_{N-1}\Biggr)\\
& \quad =  \frac{1}{C_0\cdots C_{N-1}} \left(\sum_{i=1}^{N+1}q^{-(N+1-i)} \left(  q^{i}{N \brack i}_{q} + q^{-N+i-1}{N \brack i-1}_{q} \right)\!A_{0}\cdots A_{N-i} B_{0} \cdots B_{i-2} \right) \\
& \quad = \frac{1}{C_0\cdots C_{N-1}}\sum_{i=1}^{N+1} q^{-(N+1-i)} {N+1\brack i}_{q} A_{0}\cdots A_{N-i} B_{0} \cdots B_{i-2},
\end{align*}
as desired. Since ${N\brack i}_{q}|_{q=\zeta}=0$ for $i=1,\ldots,N-1$, we conclude 
\[
C_{0}^{N}(\lambda,\mu)= \left.
\sum_{i=0}^{N-1} \frac{A_0 A_1\cdots A_{i-1}}{C_0 C_1\cdots C_{i-1}} q^{-2i}\right|_{q=\zeta} =\left. \zeta^{-(N-1)\mu}\frac{B_0 B_1\cdots B_{N-2}}{C_0 C_1\cdots C_{N-2}}\right|_{q=\zeta}=\frac{[\lambda;N-1]_{\zeta}}{[\lambda+\mu;N-1]_{\zeta}}. 
\]
\end{proof}

Next we compute the partial trace for general $n$. Recall that we have a splitting
\begin{equation}
\label{eqn:A3}
U_{N}(\lambda)^{\otimes n} = \bigoplus_{k=0}^{N-1} U_{N}(n\lambda -2k) \otimes T_{k}
\end{equation}
 where $T_{k}$ is the intertwiner space which $B_{n}$ acts on.
We denote the braid group action on $T_{k}$ by $\varphi_{T_k}:B_{n} \rightarrow \End(T_{k})$.

\begin{proposition}
\label{prop:ptrace}
\[
\trace_{2,\ldots,n}(\id \otimes h^{\otimes (n-1)}) f = \left(
\sum_{i=0}^{N-1} \frac{[\lambda;N-1]_{\zeta}}{[n\lambda-2i;N-1]_{\zeta}} \trace \varphi_{T_i}(\beta) \right) \id\]
\end{proposition}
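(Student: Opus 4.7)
My plan is to reduce the $n$-factor partial trace to the two-factor statement of Lemma~\ref{lemma:coeff2} by decomposing $V:=U_N(\lambda)^{\otimes(n-1)}$ into $U_\zeta(\sltwo)$-isotypic components and applying Lemma~\ref{lemma:coeff2} blockwise to the operator $f=\varphi_{U_N}(\beta)$. Write
\[ V=\bigoplus_{j=0}^{N-1} V_j, \qquad V_j=U_N(\nu_j)\otimes T'_j, \qquad \nu_j=(n-1)\lambda-2j, \]
where $T'_j$ is the intertwiner space for the $(n-1)$-fold tensor product. The key observation is that $\Delta(K^{1-N})=K^{1-N}\otimes K^{1-N}$, so $h^{\otimes(n-1)}$ on $V$ coincides with the action of $K^{1-N}\in U_\zeta(\sltwo)$; it therefore preserves this decomposition and restricts on $V_j$ to $h_{\nu_j}\otimes \id_{T'_j}$. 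Consequently the partial trace over $V$ splits as $\sum_j\trace_{V_j}(\cdot)^{jj}$ on $(j,j)$-blocks.

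Next I would match two decompositions of $U_N(\lambda)\otimes V$: Clebsch--Gordan on each $U_N(\lambda)\otimes U_N(\nu_j)$ gives $\bigoplus_{i,j}U_N(n\lambda-2(i+j))\otimes T'_j$, while the global isotypic decomposition is $\bigoplus_k U_N(n\lambda-2k)\otimes T_k$. Matching by weight yields the identification $T_k=\bigoplus_j T'_j$, the $j$-th summand contributed by the unique $i\in\{0,\ldots,N-1\}$ with $i\equiv k-j\pmod N$. Since $f$ is $U_\zeta(\sltwo)$-equivariant and each $V_j$ is a submodule, the block $f^{jj}:U_N(\lambda)\otimes V_j\to U_N(\lambda)\otimes V_j$ is also equivariant, and by Schur it acts on the $U_N(n\lambda-2k)$-isotypic piece as $\id\otimes (\varphi_{T_k}(\beta))_{jj}$, the diagonal block of $\varphi_{T_k}(\beta)$ with respect to $T_k=\bigoplus_j T'_j$.

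I would then invoke a minor extension of Lemma~\ref{lemma:coeff2} accommodating an auxiliary multiplicity factor: for any equivariant $g\in\End_{U_\zeta(\sltwo)}(U_N(\lambda)\otimes U_N(\mu)\otimes T)$ with trivial $U_\zeta(\sltwo)$-action on $T$, writing $g$ in a basis of $T$ as a matrix of equivariant operators on $U_N(\lambda)\otimes U_N(\mu)$ and summing Lemma~\ref{lemma:coeff2} over the diagonal entries gives
\[ \trace_{U_N(\mu)\otimes T}\bigl((\id\otimes h_\mu\otimes \id_T)\,g\bigr)=\Bigl(\sum_{i=0}^{N-1} C_i^N(\lambda,\mu)\,\trace_T(g_i)\Bigr)\id, \]
where $g_i\in\End(T)$ is the scalar component of $g$ on the $i$-th intertwiner slot of $U_N(\lambda)\otimes U_N(\mu)$. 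Applying this to each $f^{jj}$ with $\mu=\nu_j$, $T=T'_j$, summing over $j$, and using the iterated symmetry $C_i^N(\lambda,\nu_j)=C_{i+j}^N(\lambda,(n-1)\lambda)=[\lambda;N-1]_\zeta/[n\lambda-2(i+j);N-1]_\zeta$ from (\ref{eqn:symmetry}) and (\ref{eqn:coeff}), the total partial trace becomes $\sum_{i,j}\frac{[\lambda;N-1]_\zeta}{[n\lambda-2(i+j);N-1]_\zeta}\trace_{T'_j}(\varphi_{T_{i+j}}(\beta))_{jj}\cdot\id$. Regrouping by $k\equiv i+j\pmod N$ and collapsing $\sum_j\trace_{T'_j}(\varphi_{T_k}(\beta))_{jj}=\trace_{T_k}\varphi_{T_k}(\beta)$ then delivers the claimed formula.

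The hard part will be the combinatorial bookkeeping: verifying that the identification $T_k=\bigoplus_j T'_j$ coming from iterated Clebsch--Gordan genuinely matches the Schur-block decomposition of $\varphi_{T_k}(\beta)$, and that the $\pmod N$ arithmetic in the symmetry relation correctly reorganizes the double sum over $(i,j)$ into a single sum over $k$. The auxiliary-multiplicity extension of Lemma~\ref{lemma:coeff2} itself is transparent, since its proof depends only on the 1-dimensionality of the intertwiners $T_i(\lambda,\mu)$ in the original setting.
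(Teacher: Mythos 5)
Your proposal is correct and follows essentially the same route as the paper's proof: decompose $U_N(\lambda)^{\otimes(n-1)}$ into isotypic pieces $U_N((n-1)\lambda-2j)\otimes T'_j$, apply Lemma~\ref{lemma:coeff2} blockwise (the paper phrases your ``multiplicity extension'' as $\trace_{2,\ldots,n}=\sum_j \trace_2\otimes\trace_{T'_j}$), use the symmetry (\ref{eqn:symmetry}) to rewrite $C^N_i(\lambda,(n-1)\lambda-2j)$ as $C^N_{i+j}(\lambda,(n-1)\lambda)$, and regroup the double sum modulo $N$ via $T_k\cong\bigoplus_j T'_j$ and the identity $\sum_j\trace(\varphi_{T_k}(\beta))_{jj}=\trace\varphi_{T_k}(\beta)$. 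Your explicit observations that $h^{\otimes(n-1)}$ is the group-like element $K^{1-N}$ acting on the second factor, and that only the diagonal Schur blocks contribute to the trace, are details the paper leaves implicit but uses in the same way.
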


\begin{proof}
As in the proof of Lemma \ref{lemma:coeff2}, we regard all indices are considered modulo $N$.
Let us view $U_{N}(\lambda^{\otimes n}) = U_{N}(\lambda) \otimes \left( U_{N}(\lambda)^{\otimes (n-1)}\right)$.
By (\ref{eqn:split}), as $U_{\zeta}(\sltwo)$-module we have a splitting
\[  U_{N}(\lambda)^{\otimes (n-1)} \cong \bigoplus_{i=0}^{N-1} U_{N}((n-1)\lambda-2i) \otimes T'_{i}\]
where $T'_i$ denotes the intertwiner space (the multiplicity of the direct summands). 
Hence
\begin{eqnarray}
\label{eqn:A0}
U_{N}(\lambda)\otimes U_{N}(\lambda)^{\otimes (n-1)}& = &\bigoplus_{i=0}^{N-1} \left( U_{N}(\lambda)\otimes U_{N}((n-1)\lambda-2i) \right) \otimes T'_{i}\\
\label{eqn:A1}
& = & \bigoplus_{i=0}^{N-1} \left\{ \bigoplus_{j=0}^{N-1} U_{N}(n\lambda-2(i+j))\otimes T'_{i}\right\}\\
\notag
\!\!\!(\text{put } k \equiv i+j \mod N)
& = &
\bigoplus_{i=0}^{N-1} \left\{ \bigoplus_{k=0}^{N-1} U_{N}(n\lambda-2k) \otimes T'_{i}\right\}\\
\label{eqn:A2}
& = & \bigoplus_{k=0}^{N-1}U_{N}(n\lambda -2k) \otimes \left( \bigoplus_{i=0}^{N-1} T'_{i} \right).
\end{eqnarray}

Let $\beta_{i,j} \in \End(U_{N}(n\lambda-2(i+j)) \otimes T'_{i})$ be the restriction of $\varphi_{U_N}(\beta)$ on $U_{N}(n\lambda-2(i+j))\otimes T'_{i}$ in the splitting (\ref{eqn:A1}). Comparing (\ref{eqn:A2}) with (\ref{eqn:A3}) we get $T_{i} \cong \bigoplus_{i=0}^{N-1} T'_{i}$ and 
\begin{equation}
\label{eqn:A4}
\sum_{i=0}^{N-1} \trace \beta_{i,k-i} = \trace \varphi_{T_k}(\beta).
\end{equation}

By (\ref{eqn:A0}), $\trace_{2,\ldots,n}$ in the left hand side of (\ref{eqn:A0}) is written in terms of the right hand side as
\begin{equation}
\label{eqn:A5}
\trace_{2,\ldots,n} = \sum_{i=0}^{N-1} \trace_{2} \otimes \trace_{T'_{i}} 
\end{equation}
where the first term is $\trace_{2}: \End(U_{N}(\lambda)\otimes U_{N}((n-1)\lambda-2i)) \rightarrow \End(U_{N}(\lambda)) $ and second term is  $\trace_{T'_{i}}: \End(T'_{i}) \rightarrow \C$.

By Lemma \ref{lemma:coeff2} and the observation (\ref{eqn:A5}), we conclude
\begin{eqnarray*}
\trace_{2,\ldots,n}(\id \otimes h^{\otimes (n-1)}) \varphi_{U_{N}}(\beta) &\!\!\!\stackrel{(\ref{eqn:formula0}), (\ref{eqn:A5})}{=}\! & \!\!\left\{ \sum_{i=0}^{N-1} \left( \sum_{j=0}^{N-1} C_{j}(\lambda, (n-1)\lambda -2i) \trace \beta_{i,j} \right) \right\} \id \\
& \stackrel{(\ref{eqn:symmetry})}{=} &\!\! \left\{ \sum_{i=0}^{N-1} \left( \sum_{j=0}^{N-1} C_{i+j}(\lambda, (n-1)\lambda) \trace \beta_{i,j} \right) \right\} \id \\
\!\!\!(\text{put } k \equiv i+j \mod N) & = &   \!\!
\left\{ \sum_{k=0}^{N-1} C_{k}(\lambda, (n-1)\lambda) \left(\sum_{i=0}^{N-1} \trace \beta_{i,k-i} \right) \right\} \id \\
& \stackrel{ (\ref{eqn:A4})}{=} &   \!\!
\left( \sum_{k=0}^{N-1} C_{k}(\lambda, (n-1)\lambda) \trace \varphi_{T_i}(\beta) \right) \id.
\end{eqnarray*}

\end{proof}

Finally we obtain a formula of colored Alexander invariant in terms of homological representations (truncated Lawrence's presentations) of the braid groups.

\begin{theorem}
\label{theorem:main}
Let $K$ be a knot represented as a closure of an $n$-braid $\beta$. Then 
\begin{equation}
\label{eqn:main}
\Phi^{N}_{K}(\lambda) = \zeta^{(N-1)\lambda e(\beta)}\sum_{i=0}^{N-1} \frac{[\lambda;N-1]_{\zeta}}{[n\lambda-2i;N-1]_{\zeta}}\left(\sum_{j=0}^{n-1} \left.\trace l^{N}_{n,i+(N-1)j}(\beta)\right|_{x=\zeta^{-2\lambda}, d=-\zeta^{2}}\right)
\end{equation}
\end{theorem}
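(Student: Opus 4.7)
The plan is to assemble the theorem from ingredients already established in the excerpt; no new machinery is required, and the proof is essentially bookkeeping that reconciles three different descriptions of the braid group representation.

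First I would start from the defining equation (\ref{eqn:ca}) of the colored Alexander invariant, together with the definition (\ref{eqn:cadef}) of $Q^{U_N}(T_\beta)$, which reads
\begin{equation*}
\Phi^{N}_{K}(\lambda)\,\id \;=\; \zeta^{(N-1)\lambda e(\beta)}\,\trace_{2,\ldots,n}\bigl((\id \otimes h^{\otimes(n-1)}) \circ \varphi_{U_N}(\beta)\bigr).
\end{equation*}
The whole task is thus to rewrite the right-hand side as a sum of traces of truncated Lawrence's representations.

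Next I would invoke Proposition \ref{prop:ptrace}, taking $f = \varphi_{U_N}(\beta)$. This immediately converts the partial trace into the intertwiner-trace expression
\begin{equation*}
\trace_{2,\ldots,n}(\id \otimes h^{\otimes(n-1)})\,\varphi_{U_N}(\beta) \;=\; \left(\sum_{i=0}^{N-1}\frac{[\lambda;N-1]_{\zeta}}{[n\lambda-2i;N-1]_{\zeta}}\,\trace\varphi_{T_i}(\beta)\right)\id,
\end{equation*}
exposing the scalar coefficients $[\lambda;N-1]_\zeta / [n\lambda-2i;N-1]_\zeta$ that appear in (\ref{eqn:main}). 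At this stage the only remaining job is to identify $\trace\varphi_{T_i}(\beta)$ with the inner sum in (\ref{eqn:main}).

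For that I would use formula (\ref{eqn:ttoy}), which splits the intertwiner space as $T_i = \bigoplus_{j=0}^{n-1} Y^{N}_{n,i+(N-1)j}$ as a braid group representation. This gives
\begin{equation*}
\trace\varphi_{T_i}(\beta) \;=\; \sum_{j=0}^{n-1}\trace\varphi^{Y}_{n,i+(N-1)j}(\beta).
\end{equation*}
Finally I would apply Theorem \ref{theorem:key} to each summand, replacing $\trace\varphi^{Y}_{n,m}(\beta)$ by $\trace l^{N}_{n,m}(\beta)\big|_{x=\zeta^{-2\lambda},\,d=-\zeta^{2}}$. Substituting back and using (\ref{eqn:ca}) to strip off the identity matrix yields precisely (\ref{eqn:main}).

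There is no genuine obstacle: every nontrivial ingredient, namely the partial trace formula, the intertwiner decomposition, and the homological identification, has already been proved. The only thing to be careful about is the index bookkeeping, in particular that the range $0 \le i \le N-1$, $0 \le j \le n-1$ in the homological sum arises naturally from the ranges appearing in (\ref{eqn:ttoy}) and Proposition \ref{prop:ptrace}, and that the framing factor $\zeta^{(N-1)\lambda e(\beta)}$ is carried through unchanged from (\ref{eqn:cadef}).
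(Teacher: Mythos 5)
Your proposal is correct and follows essentially the same route as the paper: apply Proposition \ref{prop:ptrace} to $\varphi_{U_N}(\beta)$, decompose each intertwiner trace via (\ref{eqn:ttoy}) into traces over the $Y^{N}_{n,i+(N-1)j}$, and convert those to traces of $l^{N}_{n,m}$ by Theorem \ref{theorem:key}. The paper's proof is just a terser version of the same bookkeeping, so there is nothing to add.
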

\begin{proof}
As we have seen in (\ref{eqn:ttoy}), the intertwiner space $T_{i}$ is isomorphic to $\bigoplus_{j=0}^{n-1} Y_{n,i+(N-1)j}$ as a braid group representation. By Theorem \ref{theorem:key}, $\varphi^{Y}_{n,i+(N-1)j}(\beta)$ is equal to $l^{N}_{n,i+(N-1)j}(\beta)|_{x=\zeta^{-2\lambda}, d=-\zeta^{2}}$, hence Proposition \ref{prop:ptrace} gives the desired formula.
\end{proof}

Theorem \ref{theorem:main} recovers the classical formula of the Alexander polynomial.

\begin{example}
\label{exam:N=2}
Let us consider the case $N=2$ so we put $q = \zeta = \exp(\frac{2\pi\sqrt{-1}}{4})$. By Theorem \ref{theorem:main}, for a knot $K$ represented as the closure of an $n$-braid $\beta$, we have
\[ \Phi^{2}_{K}(\lambda) =  \zeta^{\lambda e(\beta)}\left( \frac{[\lambda]_{\zeta}}{[n\lambda]_{\zeta}}\sum_{j=0}^{n-1}\trace l^{2}_{n,2j}(\beta) - \frac{[\lambda]_{\zeta}}{[n\lambda]_{\zeta}} \sum_{j=0}^{n-1}\trace l^{2}_{n,2j+1}(\beta) \right).\]

A crucial observation is that we have an isomorphism of the braid group representation
\begin{equation}
\label{eqn:tl}  
\overline{\mathcal{H}^{2}_{n,k}} = \bigwedge^{k}\overline{\mathcal{H}^{2}_{n,1}} =  \bigwedge^{k}\mathcal{H}_{n,1} , \text{ hence } 
\bigoplus_{k=0}^{\infty} \overline{\mathcal{H}^{2}_{n,k}} = \bigwedge \overline{\mathcal{H}^{2}_{n,1}} = \bigwedge \mathcal{H}_{n,1}
\end{equation}
That is, truncated Lawrence's representation is identified with the exterior powers of the reduced Burau representation. This shows that $
l^{2}_{n,k}(\beta) = \bigwedge^{k}l^{2}_{n,1}(\beta) = \bigwedge^{k}L_{n,1}(\beta) $ so
\[ \sum_{j=0}^{n-1}\trace l^{2}_{n,2j}(\beta) = \trace \bigwedge^{\sf even} L_{n,1}(\beta), \ \sum_{j=0}^{n-1}\trace l^{2}_{n,2j+1}(\beta) = \trace \bigwedge^{\sf odd} L_{n,1}(\beta). \]
Here $\bigwedge^{\sf even}$ and $\bigwedge^{\sf odd}$ denote the even and the odd degree part of the exterior powers. Hence 
\begin{eqnarray*}
\Phi^{2}_{K}(\lambda)& =& \zeta^{\lambda e(\beta)} \frac{\zeta^{\lambda}-\zeta^{-\lambda}}{\zeta^{n\lambda} - \zeta^{-n\lambda}}\left( \trace \bigwedge^{\sf even}L_{n,1}(\beta)|_{x=\zeta^{-2\lambda}}  - \trace \bigwedge^{\sf odd}L_{n,1}(\beta)|_{x=\zeta^{-2\lambda}} \right) \\
& = & \zeta^{\lambda e(\beta)} \frac{\zeta^{\lambda}-\zeta^{-\lambda}}{\zeta^{n\lambda} - \zeta^{-n\lambda}} \det(I-L_{n,1}(\beta)|_{x=\zeta^{-2\lambda}}),
\end{eqnarray*}
and by rewriting in terms of $x=\zeta^{-2\lambda}$, we get
\[ CA_{K}^{2}(x) = \Phi^{2}_{K}(\lambda)|_{x=\zeta^{-2\lambda}} = x^{-\half e(\beta)}\frac{x^{\half}-x^{-\half}}{x^{\frac{n}{2}} - x^{-\frac{n}{2}}} \det(I-L_{n,1}(\beta)).\]
This is nothing but the well-known formula of the Alexander polynomial \cite{bir}.
\end{example}

Unfortunately, for $N>2$, we have no clear topological interpretations or simplifications of the formula (\ref{eqn:main}). It is an interesting and important problem to understand the formula (\ref{eqn:main}) in terms of the topology of the knot complements.
In particular, it is desirable to give an independent and direct proof of the fact that (\ref{eqn:main}) yields a knot invariant. One of a difficulty toward the better understanding of the formula (\ref{eqn:main}) is a lack of our understanding of $l^{N}_{n,m}$. In a light of (\ref{eqn:tl}), we expect that $l^{N}_{n,m}$ is deduced from $\bigoplus_{j=1}^{N-1} \overline{\mathcal{H}^{N}_{n,j}} = \bigoplus_{j=1}^{N-1} \mathcal{H}_{n,j}$.

\end{document}